\newtheorem{theorem}{Theorem}
\newtheorem{proposition}[theorem]{Proposition}
\newtheorem{lemma}[theorem]{Lemma}
\newtheorem{corollary}[theorem]{Corollary}
\theoremstyle{definition}
\definecolor{verde}{RGB}{20,150,100}
\newcommand{\EEE}{\color{black}}
\newcommand{\Om}{\Omega}
\newcommand{\R}{\mathbb R}
\newcommand{\N}{\mathbb N}
\newcommand\reallywidecheck[1]{%
\savestack{\tmpbox}{\stretchto{%
  \scaleto{%
    \scalerel*[\widthof{\ensuremath{#1}}]{\kern-.6pt\bigwedge\kern-.6pt}%
    {\rule[-\textheight/2]{1ex}{\textheight}}
  }{\textheight}%
}{0.5ex}}%
\stackon[1pt]{#1}{\scalebox{-1}{\tmpbox}}%
}
\def\R{\mathbb{R}}
\def\N{\mathbb{N}}
\def \d{\delta}
\def \e{\varepsilon}
\def \res{\mathop{\hbox{\vrule height 7pt width .5pt depth 0pt
\vrule height .5pt width 6pt depth 0pt}}\nolimits}
\def\1{{{\bf 1}}}
\begin{document}    
\title[]{Optimal partitions \\ for Robin Laplacian eigenvalues}
\author[]{Dorin Bucur, Ilaria Fragal\`a, Alessandro Giacomini}
\thanks{D.B. is member of the Institut Universitaire de France. I.F. and A.G. are members of the Gruppo Nazionale per L'Analisi Matematica, la Probabilit\`a e loro Applicazioni (GNAMPA) of the Istituto Nazionale di Alta Matematica (INdAM)
}

\address[Dorin Bucur]{
Univ. Grenoble Alpes, Univ. Savoie Mont Blanc, CNRS, LAMA\\
73000 Chambéry (France)
}
\email{dorin.bucur@univ-savoie.fr}

\address[Ilaria Fragal\`a]{
Dipartimento di Matematica \\ Politecnico  di Milano \\
Piazza Leonardo da Vinci, 32 \\
20133 Milano (Italy)
}
\email{ilaria.fragala@polimi.it}

\address[Alessandro Giacomini]{
DICATAM, Sezione Matematica \\ 
Universit\`a degli Studi di Brescia\\
Via Branze 43\\
25123 Brescia (Italy) 
}
\email{alessandro.giacomini@unibs.it}

\keywords{ Optimal partitions, Robin Laplacian eigenvalues.}
\subjclass[2010]{ 49Q10, 49K40, 65N25, 35R35 }

\date{\today}

\begin{abstract}
{We prove the existence of an optimal partition for the multiphase shape optimization problem which consists in minimizing the sum of the first Robin Laplacian eigenvalue of $k$  mutually disjoint {\it open} sets which have a $\mathcal H ^ {d-1}$-countably rectifiable boundary and are contained into a given box $D$ in $\R ^d$. 
 }
   \end{abstract}
\maketitle

\section{Introduction}

 Aim of this paper is to study a multiphase shape optimization problems with Robin boundary conditions. More precisely, given an open bounded subset $D$ of $\R ^ d$ with Lipschitz boundary, we consider the optimal partition problem
 $$(P) \qquad \inf \left\{ \sum _{i=1} ^k \lambda _ 1 (\Omega _i, \beta) \ :\  (\Omega _1, \dots, \Omega_k) \in \mathcal A ( D)\right\} \,, $$ 
 where $\mathcal A (D)$ is the class of $k$-tuples of open domains contained into $D$ 
 such that $\Omega _i \cap \Omega _j=\emptyset$ for $i \neq j$, and 
 $\lambda _1 (\Omega, \beta)$ denotes the 
first Robin Laplacian eigenvalue, for a fixed parameter $\beta >0$.  If $\Omega$ is sufficiently smooth (say Lipschitz), it is defined by
 \begin{equation}\label{f:defrobin}\lambda _ 1 (\Omega, \beta):=\inf_  {u \in H ^ 1 (\Omega)  \setminus \{0\} }   \frac{ \int_{\Omega} |\nabla u | ^ 2  \, dx   + \beta \int_{\partial \Omega} u ^2\, d \mathcal H ^ { d-1} }
  { \int _{\Omega} u^ 2 \, dx }     \, . 
  \end{equation} 

The analogous to problem $(P)$ in which $\lambda _ 1 (\Omega, \beta)$ is replaced by the first Dirichlet eigenvalue $\lambda _1 (\Omega)$ of the Laplacian has been extensively studied in the last years. In that case, the starting point of the analysis was the existence result in the class of quasi-open sets given in \cite{BuBuHe},  and then the main existence statements in the class of 
open sets were proved by Conti-Terracini-Verzini \cite{CTV03, CTV05bis, CTV05} and Caffarelli-Lin \cite{CL07}; meanwhile and subsequently, regularity results for optimal partitions have been obtained in \cite{CL07, CTV03, HeHoTe, Ramos};  without attempt of completeness, further related works are \cite{BoVe, BNHV, BV14, He07, He10}. 

In contrast, problem (P) that we view as a prototype of a multiphase free discontinuity problem, seems to be completely unexplored. 
The topic which drove 
our attention to problem (P) was its connection with optimal Cheeger partitions. In fact, by combining the results about the asymptotics of optimal Cheeger clusters recently obtained in \cite{BF17, bfvv17} with the relationship between $\lambda _ 1 (\Omega, \beta)$ and the quotient $\beta \frac{|\partial \Omega|}{|\Omega|}$,  the first and second authors have proved in \cite{bf17R} that, in dimension $d =2$ and when the number of cells $k$ becomes very large, optimal partitions for problem $(P)$ form a very special ``hexagonal'' pattern.  Namely, if 
 $r _k (D, \beta)$ denotes the infimum of problem $(P)$, when the cells $\Om_i$ are convex planar sets, it holds
$$
\lim _{k\to + \infty} \frac{|D|^ {1/2} }{ k ^ {3/2} } r _k (D, \beta) 
=  \beta h ( H)
\, , 
$$
\smallskip
being $h ( H)$ the Cheeger constant of the unit area regular hexagon (see \cite{Leo} for an overview about the Cheeger problem). Remarkably, an analogous honeycomb-like asymptotical behaviour of optimal partitions in the Dirichlet case, conjectured by Caffarelli and Lin in \cite{CL07}, is still unproved. 
\par
Being this the state of the art, we were led in a natural way to investigate existence and regularity issues for multiphase problems 
in the Robin case.  
As a first contribution in this direction, the present work concerns the existence of optimal partitions in the class of open sets. 
Due to the presence of the  boundary term in \eqref{f:defrobin}, the shape functional $\lambda _ 1(\Omega, \beta)$ behaves quite differently from $\lambda _ 1(\Omega)$ (in particular, it  lacks monotonicity under domain inclusion), and obtaining the existence of solutions requires a completely different approach.  
Our strategy moves along the way traced by the first and third authors in some previous works about variational problems for the Robin Laplacian (see  \cite{BG10, BG15, BG16}). In particular, we settle our partition problem in the 
class 
\begin{equation}\label{f:defA} \begin{array}{ll}
\mathcal A (D)  := \Big \{& (\Omega _1, \dots, \Omega _k) \ :\   \Omega _i \subset D \, ,  \Omega _i \cap \Omega _j=\emptyset\text{ for } i \neq j 
\\ \noalign{\smallskip} 
& \Omega _i \text{ open},\  \partial \Omega_i \  \mathcal H ^ {d-1} \text{-countably rectifiable with}\  \mathcal H ^ {d-1} (\partial \Omega_i) < +\infty  \Big \} \,. \end{array} 
\end{equation}
 Notice carefully that the sets $\Omega _i$ do not need to be Lipschitz, and in particular they may contain inner cracks. 
Nevertheless,  the definition of first Robin eigenvalue can be extended to any  open set $\Omega$ having a
$\mathcal H ^ {d-1}$-countably rectifiable boundary of finite $\mathcal H ^ {d-1}$ measure
by setting
  \begin{equation}\label{f:deflambda}
 \lambda _ 1 (\Omega , \beta):=  \inf _  {u \in (H ^ 1 (\Omega) \cap L ^ \infty (\Omega) ) \setminus \{0\} }   \frac{ \int_{\Omega} |\nabla u | ^ 2  \, dx   + \beta \int_{\partial \Omega} \big [(u^+) ^ 2 + (u ^-) ^ 2  \big ] \, d \mathcal H ^ { d-1} }
 { \int _{\Omega} u^ 2 \, dx }     \, .  
\end{equation}
Here $u ^ \pm$ denote the two traces of the $BV$ function $u$ (extended to zero outside $\Omega)$ along $\partial \Omega$ (see Section \ref{sec:prel} for more details). In particular, if $\Omega$ is Lipschitz, one of the traces is $0$ and the other one coincides with the usual trace of $u$ in $H ^ 1 (\Omega)$, so that definition \eqref{f:deflambda} gives back  \eqref{f:defrobin}. 

In this setting we obtain the following existence result:

 \begin{theorem}\label{t:robin}
 Let the class $\mathcal A(D)$ be given by \eqref{f:defA} and, for every $(\Omega_1, \dots, \Omega _k) \in \mathcal A (D)$, let $\lambda _1 (\Omega_i,\beta)$ be defined by \eqref{f:deflambda}. Then problem $(P)$ admits a solution. 
\end{theorem}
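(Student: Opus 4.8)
The strategy is the direct method of the calculus of variations, carried out at the level of $SBV$ functions rather than of sets, in the spirit of the free-discontinuity approach to the Robin Laplacian developed in \cite{BG10, BG15, BG16}. The first step is a \emph{relaxed reformulation}. To each admissible $k$-tuple $(\Omega_1,\dots,\Omega_k)\in\mathcal A(D)$ and each $L^\infty$ first eigenfunction $u_i\in H^1(\Omega_i)\cap L^\infty(\Omega_i)$ one associates its extension by zero $\bar u_i\in SBV(\R^d)$, which is supported in $\overline{\Omega_i}$, has $\mathcal H^{d-1}$-rectifiable jump set contained in $\partial\Omega_i$, and satisfies $\int_{\R^d}|\nabla \bar u_i|^2 + \beta\int_{J_{\bar u_i}}\big[(\bar u_i^+)^2+(\bar u_i^-)^2\big]\,d\mathcal H^{d-1} = \lambda_1(\Omega_i,\beta)\int \bar u_i^2$, with the $\bar u_i$ having (essentially) disjoint supports because the $\Omega_i$ are disjoint. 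This suggests introducing the functional
\begin{equation}\label{f:relaxed}
\mathcal F(v_1,\dots,v_k):=\sum_{i=1}^k \frac{\int_{\R^d}|\nabla v_i|^2\,dx + \beta\int_{J_{v_i}}\big[(v_i^+)^2+(v_i^-)^2\big]\,d\mathcal H^{d-1}}{\int_{\R^d} v_i^2\,dx}
\end{equation}
over $k$-tuples of functions $v_i\in SBV(\R^d)\cap L^\infty(\R^d)$, $v_i\not\equiv 0$, supported in $\overline D$, with pairwise disjoint supports up to Lebesgue-null sets. One shows that $\inf\mathcal F$ equals the infimum of $(P)$: the inequality $\inf\mathcal F\le \inf(P)$ comes from the construction above, and the reverse inequality is obtained by showing that, given an almost-optimal $k$-tuple $(v_i)$, the open sets $\Omega_i:=\{x: v_i \text{ has an approximate neighborhood where it is } \ne 0\}$ — more precisely suitable open sets built from the $v_i$ whose boundaries are contained in $J_{v_i}$ up to $\mathcal H^{d-1}$-null sets — lie in $\mathcal A(D)$ and give back a competitor with no larger energy; this is exactly the type of ``de-relaxation'' argument carried out in \cite{BG15, BG16} for the single-phase problem, and is presumably recalled in the preliminary section.

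The second step is \emph{compactness}. Take a minimizing sequence $(v_1^n,\dots,v_k^n)$ for $\mathcal F$; normalize each component by $\|v_i^n\|_{L^2}=1$, so that the Rayleigh quotients coincide with the numerators and the sequence $\int|\nabla v_i^n|^2 + \beta\int_{J_{v_i^n}}[(v_i^{n,+})^2+(v_i^{n,-})^2]\,d\mathcal H^{d-1}$ is bounded. The crucial a priori bound is a uniform $L^\infty$ estimate $\|v_i^n\|_{L^\infty}\le C$: this is the standard truncation argument for Robin eigenfunctions (truncating at height $t$ does not increase the numerator and strictly decreases it unless the function is already bounded by $t$, using that $\beta>0$ makes the boundary term monotone under truncation), and it gives a uniform bound on $\int_{J_{v_i^n}}[(v_i^{n,+})^2+(v_i^{n,-})^2]\,d\mathcal H^{d-1}\ge c\,\mathcal H^{d-1}(J_{v_i^n})$ only after one also controls the $\mathcal H^{d-1}$-measure of the jump set; for that one invokes the key technical estimate from \cite{BG15, BG16} (a Poincar\'e-type or isoperimetric-type inequality showing that a nontrivial normalized $SBV$ function with small Dirichlet energy must have jump set of controlled measure), so that $\sup_n[|\nabla v_i^n|^2_{L^2} + \mathcal H^{d-1}(J_{v_i^n})]<\infty$. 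Then Ambrosio's $SBV$ compactness theorem yields, up to a subsequence, $v_i^n\to v_i$ in $L^1(\R^d)$ (and in every $L^p$, $p<\infty$, by the $L^\infty$ bound) with $v_i\in SBV(\R^d)\cap L^\infty$, $\nabla v_i^n\rightharpoonup\nabla v_i$ weakly in $L^2$, and lower semicontinuity of both the bulk and the surface parts. The disjointness of supports passes to the limit by $L^1$-convergence, and $\mathrm{supp}\,v_i\subseteq\overline D$ is preserved; the normalization $\|v_i\|_{L^2}=1$ passes to the limit, so no component degenerates to zero — this is exactly where one needs that the uniform $L^\infty$ bound prevents the $L^2$ mass from escaping to infinity inside the fixed bounded box $D$, combined with equi-integrability from the $L^\infty$ bound.

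The third step is \emph{lower semicontinuity of the boundary term}, which is the main obstacle. The functional $w\mapsto \int_{J_w}\big[(w^+)^2+(w^-)^2\big]\,d\mathcal H^{d-1}$ is \emph{not} lower semicontinuous under $L^1$ convergence in general (mass on the jump set can disappear in the limit, e.g. through oscillations), and this is precisely the difficulty that \cite{BG15, BG16} had to overcome in the single-phase setting. The remedy is a structural one: one does not pass to the limit in $\mathcal F$ naively, but rather one first replaces each $v_i^n$ by a competitor with the same energy whose positive and negative parts are handled separately, and then uses a lower semicontinuity result for functionals of the form $\int_{J_w}\theta(w^+,w^-)\,d\mathcal H^{d-1}$ with $\theta$ subadditive and ``bi-convex'' — the relevant statement being Ambrosio's lower semicontinuity theorem for surface integrals in $SBV$ (or the refinement in \cite{BG15}), valid because $\theta(a,b)=\beta(a^2+b^2)$ can be monotonized and because the traces are controlled in $L^\infty$. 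A cleaner route, which I expect the paper to follow, is to absorb the boundary term into a bulk term by a change of perspective: introduce the auxiliary function $w_i:=$ (a fixed smooth strictly increasing transformation of $v_i$, e.g. $\Phi(v_i)$ with $\Phi'$ bounded below) so that the Robin surface energy of $v_i$ becomes comparable to a perimeter-type term of a superlevel set, then apply lower semicontinuity of perimeter. In any case the heart of the argument is to verify that
\[
\int_{J_{v_i}}\big[(v_i^+)^2+(v_i^-)^2\big]\,d\mathcal H^{d-1}\ \le\ \liminf_n \int_{J_{v_i^n}}\big[(v_i^{n,+})^2+(v_i^{n,-})^2\big]\,d\mathcal H^{d-1},
\]
invoking the precise $SBV$ lower semicontinuity theorem recalled in Section~\ref{sec:prel}. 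Granting this, together with $\int|\nabla v_i|^2\le\liminf\int|\nabla v_i^n|^2$ and $\int v_i^2=1$, we conclude $\mathcal F(v_1,\dots,v_k)\le\liminf\mathcal F(v_1^n,\dots,v_k^n)=\inf\mathcal F$, so $(v_i)$ is a minimizer of the relaxed problem. Finally, the de-relaxation from Step~1 produces an admissible $k$-tuple $(\Omega_1,\dots,\Omega_k)\in\mathcal A(D)$ realizing the infimum of $(P)$, which proves the theorem. I expect the only genuinely delicate point, beyond bookkeeping, to be the interplay between the $L^\infty$ truncation estimate and the lower semicontinuity of the surface energy — i.e.\ ensuring that the traces $v_i^{n,\pm}$ are controlled well enough (in an $L^\infty$-weak-$*$ or $\mathcal H^{d-1}$-a.e.\ sense along $J_{v_i}$) for the surface lower semicontinuity theorem to apply; this is where the contribution of \cite{BG15, BG16} is essential and will presumably be quoted as a black box or adapted with minor changes.
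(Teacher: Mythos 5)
Your proposal correctly identifies the overall strategy — relax to an $SBV$-type functional, minimize by the direct method, then recover an admissible $k$-tuple of open sets — but it misallocates the difficulty and, in doing so, skips the actual substance of the paper. The compactness and lower semicontinuity of the relaxed functional, which you spend most of your effort worrying about, is not where the difficulty lies: the paper works in the space $SBV^{1/2}(\R^d)$ (nonnegative $u$ with $u^2\in SBV$) and cites \cite[Theorem 3.3]{BG10}, which establishes precisely the compactness and lower semicontinuity of the map $u\mapsto \int|\nabla u|^2+\beta\int_{J_u}[(u^+)^2+(u^-)^2]\,d\mathcal H^{d-1}$ under $L^2_{\rm loc}$ convergence, without any a priori $L^\infty$ or $\mathcal H^{d-1}(J_u)$ bound on the minimizing sequence. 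So your ``main obstacle'' is already packaged in a quoted black box, and your proposed detours (truncating the minimizing sequence, monotonizing $\theta(a,b)=\beta(a^2+b^2)$, or replacing the surface term by a perimeter of superlevel sets) are unnecessary; some of them also do not work as stated, e.g.\ truncating a normalized minimizing sequence changes the $L^2$ norm, so the Rayleigh quotients no longer coincide with the numerators.

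The genuine gap in your proposal is the step you dismiss in a half-sentence as ``the de-relaxation argument carried out in \cite{BG15, BG16} \dots presumably recalled in the preliminary section.'' This is the heart of the paper and cannot be black-boxed. Passing from an $SBV^{1/2}$ minimizer $(u_1,\dots,u_k)$ to an admissible $k$-tuple of \emph{open} sets with $\mathcal H^{d-1}$-rectifiable boundary of \emph{finite} $\mathcal H^{d-1}$-measure requires, in order: (i) uniform bounds $0<\alpha_i\le u_i\le M_i$ on ${\rm supp}\,u_i$ (obtained from the optimality of $u$ by comparing with $u_i\1_{\{u_i\ge\e\}}$ and $u_i\wedge M$, via the iteration scheme of \cite{BG16}); (ii) the resulting finiteness of $\mathcal H^{d-1}(J_{u_i})$ and true $SBV$ regularity; (iii) a uniform density lower bound for the supports via the Faber--Krahn inequality of \cite{BG15}; (iv) almost-quasi minimality of $u_i$ for the Mumford--Shah functional, but only inside the open region $D\cap\bigcap_{j\neq i}\omega_j^{(0)}$ where the other phases have zero density; and (v) the essential closedness $\mathcal H^{d-1}((\overline{J_{u_i}}\setminus J_{u_i})\cap D)=0$, obtained by combining the \cite{BL14} closedness theorem inside $\bigcap_{j\ne i}\omega_j^{(0)}$ with the set-theoretic identity $C_i\cap C_j=J_{u_i}\cap J_{u_j}$ on the contact region between phases. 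Only after (v) can one define $\Omega_i$ as the union of connected components of $\R^d\setminus\overline{J_{u_i}}$ on which $u_i\not\equiv 0$ and verify $(\Omega_1,\dots,\Omega_k)\in\mathcal A(D)$. Moreover the multiphase setting is \emph{not} a routine adaptation of the one-phase case: the possible contact $J_{u_i}\cap J_{u_j}\neq\emptyset$ between cells means the almost-quasi minimality holds only away from the other phases, and controlling the jump set near the contact set is a new difficulty handled via the density bound and relative isoperimetric inequality. Your proposal, as written, assumes all of this; it therefore reduces to the (comparatively easy) observation that the relaxed problem has a minimizer, plus an unproven claim that the minimizer can be turned into a legitimate open partition.
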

 
The idea of our proof follows the pioneering point of view introduced by  Alt and Caffarelli  in \cite{AlCa} in order to deal with a one phase free boundary problem with Dirichlet boundary conditions. In fact, we pass through a relaxed formulation of the problem which is a minimization on functions rather than on sets. In view of the weak definition of $\lambda _ 1 (\Omega, \beta)$ given in \eqref{f:deflambda}, 
the space $SBV(\R ^d)$  of special functions of bounded variation introduced by De Giorgi and Ambrosio in \cite{DGA} appears as the ideal ambient to study our free discontinuity problem. More precisely, taking into account that we have to model multiple phases, 
we need to consider the class of vector fields
 $$\mathcal F (D)  := \Big \{( u _1, \dots, u _k )  \in (SBV ^ {\frac{1}{2}}(\R ^d)) ^ k \ :\ {\rm supp} (u_i) \subseteq \overline D \, , \ u _i \geq 0 \, , \ u _i \cdot u _j = 0 \text{ in } D \Big \},$$ 
where 
$SBV ^ {\frac{1}{2}} (\R^d)$ denotes the space of nonnegative functions $u \in L ^ 2 (\R^d)$ such that $u ^ 2$ is in $SBV(\R ^d)$. 
\par
Our relaxed functional form of problem $(P)$ reads
$$
(\overline P ) 
 \quad \inf \left\{ \sum _{i=1} ^k \frac{ \int_{\R ^d} |\nabla u_i | ^ 2  \, dx+ \beta \int_{J _{u _i}} \big [(u_i ^+) ^ 2 + (u_i ^-) ^ 2  \big ] \, d \mathcal H ^ { d-1} }
 { \int _{\R^d} u_i ^ 2 \, dx }   \ :\  ( u _1, \dots, u _k ) \in \mathcal F (D)  \right\}. 
$$

The proof of Theorem \ref{t:robin} starts with an existence result for problem $(\overline P)$ and is
carried over in Section \ref{sec:proof}.  Its main steps are highlighted below.  For the sake of clarity, we have collected in Section \ref{sec:prel} some background material which should allow  a self-contained comprehension of the statement as well as of the proof's outline.  

\begin{itemize}
\item[I.] {\it Existence in $\mathcal F (D) $}. Problem $(\overline P)$ admits a solution. This  follows immediately from 
the compactness 
and lower semicontinuity properties in $SBV^ {\frac{1}{2}}  (\R ^d)$
proved in \cite[Theorem 3.3]{BG10} for each  addendum of  the energy in $(\overline P)$. 
 \medskip 
\item[II.]  {\it Upper and lower bounds on the supports}.  If $(u _1, \dots, u _k) \in  \mathcal F (D)$ is a solution to problem $(\overline P)$  and $\omega _i$ denotes the support of $u _i$, for every $i = 1, \dots, k$ there exist constants $M _i, \alpha _i >0$ such that $M _i \geq u _i \geq \alpha _i$ a.e.\ on  $\omega _i$. 
This follows  from the optimality of $u$, by taking respectively the competitor $v _i = u _i \1 _{\{u \geq \e \}}$ to get the lower bound,  and $v_i  = u _i \wedge M$ to get the upper bound (leaving unchanged the phases $u _j$ for $j \neq i$). In particular, for the lower bound we need to settle a careful energy estimate in order to make work in our framework an iteration scheme which can be traced back in \cite{BL14} (and has been refined in \cite{BG16}).    \medskip
\item[III.] {\it SBV regularity and finite $\mathcal H ^ {d-1}$ measure of the jump sets}. For every $i = 1, \dots, k$, the function $u _i$ satisfies $\mathcal H ^ {d-1} ( J _{u _i} ) < + \infty$ and belongs to $SBV  (\R ^d)$. This is obtained as a quite direct consequence of the previous step. 
\medskip 
\item[IV.]  {\it Essential closedness of the jump sets}. For every $i = 1, \dots, k$, the set $J _{u_i} $ is essentially closed in $D$.Also the proof of this crucial regularity property exploits the bounds obtained Step II, but it is much more delicate. It relies on a uniform density estimate from below for the supports of $u _i$ (Lemma \ref{p:density}), which in turn is obtained by applying the Faber-Krahn inequality for the first Robin Laplacian eigenvalue established in \cite{BG15}. With the aid of the local isoperimetric inequality, such a density lower bound provides some regularity properties for the supports, and gives information concerning their interaction (Corollary \ref{cor:phases}). The closure property follows by combining these properties with the fact that $u_i$ is an {\it almost-quasi minimizer} for the Mumford-Shah functional  ``well inside'' its support.
\medskip  
\item[V.] {\it Identification of an optimal $k$-tuple in $\mathcal A (D)$ and conclusion.} Denoting by $\Omega _i$ the connected component of $\R^d \setminus \overline { J _{u _i}}$ where $u _i$ does not vanish, thanks to Step IV it turns out that $(\Omega _1, \dots, \Omega _k)$ belongs to $\mathcal A (D)$ and  solves problem $(P)$. 
\end{itemize}
 
 \smallskip
A natural open question which stems from Theorem \ref{t:robin} is to establish the regularity of the free boundaries in an optimal $k$-tuple.  As a first step in this direction, we show that the jump sets are Ahlfors regular in $D$ (see Proposition \ref{p:Ahlf}). 
For a one phase problem under Robin boundary conditions, some regularity properties have been recently obtained in \cite{CK16, K15}; we think it would be interesting to investigate their validity in our multiphase context.

To conclude, let us mention that our results can be extended, with minor modifications in the proofs, to other shape functionals under Robin boundary conditions, such as for instance thermal insulation of multiple obstacles, torsional rigidity or $p$-Laplacian energies. A more challenging target is to deal with optimal  partitions for higher eigenvalues of the Robin Lapacian.

  \section {Preliminaries} \label{sec:prel}

\subsection{Basic notation}  
\label{subsec:notation}
Throughout the paper $B_\rho(x)$ will denote the open ball with center $x\in \R^d$ and radius 
$\rho>0$. Given $E\subseteq \R^d$, $\1_E$ will stand for its characteristic function, while $|E|$ will denote its Lebesgue measure. In particular we set $\omega_d:=|B_1(0)|$. Finally, given $a,b\in \R$, we set $a\wedge b:=\min\{a,b\}$.

\subsection{Definition of $\lambda _1 (\Omega, \beta)$ for general open sets}
Let us recall how the notion of first  Robin Laplacian eigenvalue can be extended to domains with possibly irregular boundary. 
Assume that $\Omega$ is an open set with a 
$\mathcal H ^ {d-1}$-countably rectifiable boundary of finite $\mathcal H ^ {d-1}$ measure.  For
  $\mathcal H ^ {d-1}$-a.e.\ $x \in \partial \Omega$, 
  let $\nu (x) $ denote the unit outer normal to $\partial \Omega$ and   set $B _\rho ^ \pm (x, \nu (x)) := \{ y \in B _\rho (x) \ :\ (y-x) \cdot \nu (x) 
    \gtrless 0 \}$. 
Given $ u \in H ^ 1 (\Omega) \cap L ^ \infty (\Omega)$, extend it to zero outside $\Omega$, and let $u ^ \pm$ denote its two traces along $\partial \Omega$, defined by 
  \begin{equation}\label{f:traces}
  u ^ {\pm} (x):= \lim _{\rho \to 0 } \frac{1}{|B _\rho ^ \pm (x, \nu (x)) |}  \int _{ B _\rho ^ \pm (x, \nu (x))  } u ( y) \, dy \,.
  \end{equation}
  These limits turn out to 
  exist at $\mathcal H ^ {d-1}$-a.e.\  $x \in \partial \Omega$ because $u \1 _\Omega$ belongs to $BV (\R ^d)$ ({\it cf.} \cite[Proposition 4.4]{AFP}).  
   Then, for every  $\beta >0$ ({\it assumption which will be kept throughout the paper with no further mention}), following \cite[(3.4)]{BG15}, we set 
$$ \lambda _ 1 (\Omega , \beta):=  \min _  {u \in (H ^ 1 (\Omega) \cap L ^ \infty (\Omega) ) \setminus \{0\} }   \frac{ \int_{\Omega} |\nabla u | ^ 2  \, dx   + \beta \int_{\partial \Omega} \big [(u^+) ^ 2 + (u ^-) ^ 2  \big ] \, d \mathcal H ^ { d-1} }
 { \int _{\Omega} u^ 2 \, dx }     \, .  
$$

We point out that,  if $\Omega$ is Lipschitz, one of the traces of $u$ is $0$ and the other one coincides with the usual trace  in $H ^ 1 (\Omega)$, so that the above definition gives back  \eqref{f:defrobin}. 
  
  \smallskip
  
\subsection{The space $SBV ^ {\frac{1}{2}} ( \R ^d)$}  In the functional formulation of minimization problems for the above defined Robin eigenvalue,  the following space intervenes in a natural way (see \cite[Definition 3.1]{BG10}):
 $$
 SBV ^ {\frac{1}{2}} (\R ^d):= \Big \{ u \in L ^ 2 (\R ^d) \ :\ u \geq 0 \text{ a.e. in } \R ^d \, , \ u ^ 2 \in SBV (\R ^d) \Big \}\,.
 $$ 
  Here $SBV (\R ^d)$ denotes the space of {\it special functions of bounded variation}, namely functions $u \in L ^ 1 ( \R ^d)$ which have bounded variation  (meaning that the distributional gradient $Du$ is a measure) and are such that the singular part $D ^ s u$ of the measure $Du$ is concentrated on the {\it jump set} $J _u$. By definition, $J _u$ is the set of points $x \in \R ^d$ such that the approximate upper and lower limits $u ^ \pm (x)$ do not coincide:
\begin{equation}\label{f:aplimits} 
u ^ + (x) := \inf \big \{ t \in \R \ :\ x \in \{ u >t\} ^0 \big \}\, , \qquad  u ^ - (x) := \sup \big \{ t \in \R \ :\ x \in \{ u <t\} ^0 \big \},
\end{equation}  
where $E ^{(0)}$ stands for the set of points $y \in \R ^d$ at which $E$ has a zero density, {\it i.e.} 
$$
\lim _{\rho \to 0} \frac{|E \cap B _\rho (y) | }{\rho ^ d}= 0.
$$ 
Let us recall that every function $u \in SBV (\R ^d)$ is approximately differentiable a.e. (with approximate gradient denoted by $\nabla u$),  its jump set $J _u$ is $\mathcal H ^ {d-1}$-countably rectifiable, and the distributional gradient $Du$ is given by
$$Du (E) = \int _E \nabla u \, dx + \int_{J _u \cap E} ( u ^+- u ^-) \, \nu _u  \, d \mathcal H ^ {d-1}\qquad \text{for every Borel set } E\, . $$ 
Here $\nu _u: J _u \to \mathbb S ^ {d-1}$ is a Borel unit normal vector field such that, for $\mathcal H ^ {d-1}$-a.e.\ $x \in J _u$, the approximate upper and lower limits defined by \eqref{f:aplimits} agree with the traces defined by \eqref{f:traces} (taking $\nu = \nu _u$). 
\par
A case of special relevance is when $u$ is the characteristic function of a set $E$ of {\it finite perimeter}: in that case, the measure $Du$ is purely singular, and the jump set agrees with the {\it essential boundary} $\partial ^ e E$, defined by 
 $$
 \partial ^ e E:= \R ^ d \setminus \big (E ^ {(1)} \cup E^{(0)}  \big ),
 $$ 
where $E ^{(1)}$ stands for the set of points $y \in \R ^d$ at which $E$ has a density one.
 
 \medskip
 We refer the reader to the monograph \cite{AFP} for the theory of functions of bounded variation, and to \cite{BG10} for fine properties of functions in $SBV ^ {\frac{1}{2}} ( \R ^d)$. 
  
Here, in order to make sense of the relaxed energy in $(\overline P)$,  we limit ourselves to recall that a function $u \in SBV ^ {\frac{1}{2}} (\R ^d)$ is approximately differentiable a.e. (with approximate gradient still denoted by $\nabla u$)   
  and that its jump set (still denoted by $J _u$) is $\mathcal H ^ {d-1}$-countably rectifiable; moreover, $\nabla u$ and $J _u$ (the latter endowed with a unit normal vector $\nu _u$) are related respectively to the absolutely continuous and to the singular part of $D(u ^ 2)$ by the identities
$$
(\nabla u ^ 2) dx = (2 u \nabla u ) dx \, , \qquad\text{and}\qquad D ^ s (u ^ 2)  = [(u^+) ^ 2 - ( u ^ -) ^ 2 ] \nu _u \mathcal H ^ {d-1} \res J _u\,.
$$

\smallskip
\subsection{Faber-Krahn inequalities}  

The main results of \cite{BG15} can be summarized in the equalities
\begin{equation}
\label{f:FK}
\min_{\stackrel{u\in SBV^{1/2}(\R^d),}{|supp(u)|=m}}
\frac{\int_{\R^d}|\nabla u|^2\,dx+\int_{J_u}[(u^+)^2+(u^-)^2]\,d\mathcal H^{d-1}}{\int_{\R^d}u^2\,dx}
=
\min_{|\Om|=m}\lambda_1(\Om,\beta)=\lambda_1(B,\beta),
\end{equation}
where $m>0$, $B$ is a ball such that $|B|=m$, and $\Om$ varies in the class of open sets with a 
$\mathcal H ^ {d-1}$-countably rectifiable boundary of finite $\mathcal H ^ {d-1}$ measure. Minimizers of the functional problem are supported on balls, the function coinciding with the associated first eigenfunction of the Robin-Laplacian. Finally, open sets optimal for $\lambda_1$ coincide with a ball up to $\mathcal H^{d-1}$-negligible sets.

 \smallskip
\subsection{Almost-quasi minimizers for the Mumford-Shah functional}
\label{sec:mon}
We will say that $u\in SBV(\Om)$ is an {\it almost-quasi minimizer} for the Mumford-Shah functional if there exist $0<\Lambda_1\le\Lambda_2$, $\alpha>0$ and $c_\alpha\ge 0$ such that for every $B _\rho (x) \subset \Omega$ and for every $v \in SBV (\Omega)$ such that $ \{ v  \neq u  \} \subseteq B _\rho (x)$ we have
$$
\int _{B _\rho (x)} |\nabla u| ^ 2 \, d x + \Lambda_1 \mathcal H ^ {d-1} ( J _{u} \cap \overline B _\rho (x))   \leq  
\int _{B _\rho (x)} |\nabla v| ^ 2 \, d x + \Lambda_2  \mathcal H ^ {d-1} ( J _{v} \cap \overline B _\rho (x))  + c _\alpha \rho ^ {d-1 + \alpha}\,. 
$$
In the case $\Lambda_1=\Lambda_2$, such a notion reduces to that of {\it quasi-minimizer} introduced by \textsc{De Giorgi, Carriero} and \textsc{Leaci} in \cite{DGCL}. Under the strict inequality sign, the notion has been introduced in \cite{BL14}, where it was already applied to analyse a (one phase) free discontinuity problem with Robin boundary conditions.
\par
In our analysis, we will employ the following property \cite[Theorem 3.1]{BL14}: the jump set of an almost-quasi minimizers is essentially closed, {\it i.e.},
\begin{equation}
\label{eq:ess-closed-aqm}
\mathcal H^{d-1}\left( (\overline{J_u}\setminus J_u)\cap \Om \right)=0.
\end{equation}

 \section {Proof of Theorem \ref{t:robin} }\label{sec:proof}

\subsection{Existence in $\mathcal F (D)$}

 \begin{proposition}
 Problem $(\overline P)$ admits a solution. 
 \end{proposition}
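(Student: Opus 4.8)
The plan is to run the direct method. First I would note that $(\overline P)$ has finite infimum: taking $k$ pairwise disjoint balls $B^{(1)},\dots,B^{(k)}\subset D$ and letting $u_i$ be the first Robin eigenfunction of $(B^{(i)},\beta)$ extended by zero produces an element of $\mathcal F(D)$ with energy $\sum_{i=1}^k\lambda_1(B^{(i)},\beta)<+\infty$. Now let $(u_1^n,\dots,u_k^n)\in\mathcal F(D)$ be a minimizing sequence. Since each addend of the functional in $(\overline P)$ is $0$-homogeneous in $u_i$ and the constraints defining $\mathcal F(D)$ are stable under the rescaling $u_i\mapsto t\,u_i$ with $t>0$, I may assume without loss of generality that $\int_{\R^d}(u_i^n)^2\,dx=1$ for every $i=1,\dots,k$ and every $n$.

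With this normalization, convergence of the total energies forces each term $\int_{\R^d}|\nabla u_i^n|^2\,dx+\beta\int_{J_{u_i^n}}[(u_i^{n,+})^2+(u_i^{n,-})^2]\,d\mathcal H^{d-1}$ to be bounded uniformly in $n$; together with $\|u_i^n\|_{L^2(\R^d)}=1$ and ${\rm supp}(u_i^n)\subseteq\overline D$, these are exactly the hypotheses of the compactness part of \cite[Theorem 3.3]{BG10}, which I would apply separately for each index $i$. Hence, up to a (not relabeled) subsequence, there are $u_i\in SBV^{1/2}(\R^d)$ with $u_i^n\to u_i$ in $L^2(\R^d)$ (so $(u_i^n)^2\to u_i^2$ in $L^1$, with the attendant weak convergence of the absolutely continuous and jump parts), and, up to a further subsequence, $u_i^n\to u_i$ a.e. Nonnegativity and the support constraint pass to the limit under $L^2$ convergence, $u_i^n u_j^n=0$ in $D$ gives $u_i u_j=0$ a.e.\ in $D$ by a.e.\ convergence, and $\int_{\R^d}u_i^2\,dx=1$, so in particular each $u_i$ is nonzero; thus $(u_1,\dots,u_k)\in\mathcal F(D)$. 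Finally, the lower semicontinuity part of \cite[Theorem 3.3]{BG10}, applied to each addend and summed over $i$, yields
\[
\sum_{i=1}^k\Big(\int_{\R^d}|\nabla u_i|^2\,dx+\beta\int_{J_{u_i}}[(u_i^+)^2+(u_i^-)^2]\,d\mathcal H^{d-1}\Big)\le\liminf_{n\to\infty}\sum_{i=1}^k\Big(\cdots\Big)=\inf(\overline P),
\]
and since $\int_{\R^d}u_i^2\,dx=1$ this means $(u_1,\dots,u_k)$ realizes the infimum.

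The genuinely delicate point is not in the argument above but in the result it invokes, namely the compactness in $SBV^{1/2}(\R^d)$. A uniform bound on $\int_{J_{u_i^n}}[(u_i^{n,+})^2+(u_i^{n,-})^2]$ does not control $\mathcal H^{d-1}(J_{u_i^n})$, so Ambrosio's $SBV$ closure theorem does not apply directly, and both the strong $L^2$ convergence and the lower semicontinuity of the jump contribution require the finer analysis of \cite[Theorem 3.3]{BG10}. Everything else is the routine direct method, with the homogeneity of the Rayleigh quotients being what prevents loss of mass in the limit.
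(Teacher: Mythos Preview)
Your proof is correct and follows essentially the same route as the paper: normalize the minimizing sequence to $\|u_i^n\|_{L^2}=1$, invoke the compactness and lower semicontinuity of \cite[Theorem~3.3]{BG10} componentwise, and check that the constraints pass to the limit. Your choice of $k$ disjoint balls with their Robin eigenfunctions as the finiteness competitor is in fact cleaner than the paper's $(\1_D,0,\dots,0)$, whose zero components make the corresponding Rayleigh quotients ill-defined.
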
 
 \begin{proof} 
 Let $u^n= (u^n _1, \dots, u^n _k)$ be a minimizing sequence in  $\mathcal F (D)$. It is not restrictive to assume that
  ${ \int _{\R^d}( u^n_i )^ 2} = 1$ for every  $i 
  \in  \{1, \dots, k \}$ and every $n \in \N$.  By comparing with $(\1 _D, 0, \dots, 0 )$, we get  $$  \sum _{i=1} ^k  \int_{\R ^d} |\nabla u ^n _i | ^ 2  \, dx+ \beta \int_{J _{u ^n _i}} \big [((u^n_i) ^+) ^ 2 + ((u^n_i )^-) ^ 2  \big ] \, d \mathcal H ^ { d-1}  \leq C \,.$$ 
  Then, by \cite[Theorem 3.3]{BG10}, up to (not relabeled) sequences, for every $i = 1, \dots, k$, the sequence $ u^n_i $ converges strongly in $L ^ 2 _{\rm loc} (\R ^ d)$ to a function $u _i \in SBV^ {\frac{1}{2}}  (\R ^d)$,  with
 $$
 \int_{\R ^d} |\nabla u_i | ^ 2 \, dx  \leq \liminf _{n \to + \infty}  \int_{\R ^d} |\nabla u^n_i | ^ 2   \, dx 
 $$
 and
 $$
 \int_{J _{u _i}} \big [(u_i ^+) ^ 2 + (u_i ^-) ^ 2  \big ] \, d \mathcal H ^ { d-1}  \leq \liminf _{n \to + \infty}
 \int_{J _{u^n _i}} \big [((u^n_i )^+) ^ 2 + ((u^n_i )^- )^ 2  \big ] \, d \mathcal H ^ { d-1}. 
 $$ 
Notice that  the strong convergence of $u ^ n _i$ to $u _i$ in $L ^ 2 _{\rm loc} (\R ^ d)$ ensures that $u$ still satisfies the conditions ${\rm supp} (u_i) \subseteq \overline D$, $u _i \geq 0$, $\int_{D}u_i^2\,dx=1$, and  $u _i \cdot u _j = 0$  in  $D$.    Summing  the above inequalities over $i = 1, \dots, k$, we see that $u= ( u _1, \dots, u _k)$ is a solution to $(\overline P ) $. 
\end{proof}
 
\subsection{Upper and lower bounds on the supports.} 

   \begin{proposition}\label{p:bounds} Let $u= (u _1, \dots, u_k)\in \mathcal F (D) $ be a solution to problem $(\overline P)$. For every $i = 1, \dots, k$, there exist constants $M _i, \alpha _i$ such that
   \begin{equation}\label{f:Bounds}
   M _i \geq u _i \geq \alpha _ i >0 \qquad \text{a.e.\ on \ } supp(u_i)\,.
   \end{equation}
     \end{proposition}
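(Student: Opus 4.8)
The plan is to prove the two bounds separately, in each case perturbing only the $i$-th phase while leaving the others untouched, and exploiting the minimality of $u$ in $(\overline P)$.

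\textbf{Upper bound.} First I would fix $M>0$ to be chosen large and test with the competitor $v_i := u_i \wedge M$ (and $v_j := u_j$ for $j\neq i$); this is still an admissible element of $\mathcal F(D)$ since truncation preserves nonnegativity, support inclusion in $\overline D$, and the disjointness condition $v_i\cdot v_j=0$. Writing $A_M := \{u_i > M\}$, on $A_M$ one has $\nabla v_i = 0$, so $\int |\nabla v_i|^2 \le \int |\nabla u_i|^2$ with strict decrease proportional to $\int_{A_M}|\nabla u_i|^2$. For the jump term one must check that truncation does not increase $\int_{J_{v_i}}[(v_i^+)^2+(v_i^-)^2]\,d\mathcal H^{d-1}$: on $J_{u_i}$ the traces only get smaller (or the point leaves the jump set), and no new jumps are created, so this term decreases as well. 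In the denominator, $\int v_i^2 = \int u_i^2 - \int_{A_M}(u_i^2 - M^2)$. Plugging into the Rayleigh-type quotient and using that the quotient for $u_i$ equals some finite value $\lambda_i$, minimality forces, after rearranging, an inequality of the form $c\int_{A_M}(u_i^2-M^2)\,dx \le 0$ once $M$ is large enough that $\lambda_i \int v_i^2$-type terms are controlled; hence $|A_M| = 0$ for such $M$, giving $u_i \le M_i := M$ a.e.\ on $\mathrm{supp}(u_i)$. (Alternatively, and perhaps more cleanly, one notes $u_i$ is a subsolution of a Robin eigenvalue problem and invokes an $L^\infty$ bound; but the direct truncation argument is self-contained given Step~I.)

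\textbf{Lower bound.} This is the delicate part, and the one I expect to be the main obstacle. Here I would test with $v_i := u_i \1_{\{u_i \ge \e\}}$ for small $\e>0$, again leaving the other phases fixed; admissibility is clear. The point is to show that removing the ``thin'' part $\{0 < u_i < \e\}$ cannot strictly decrease the energy unless that part is already empty. The numerator changes in two competing ways: we lose the Dirichlet energy $\int_{\{0<u_i<\e\}}|\nabla u_i|^2$ and we lose (part of) the jump energy on $J_{u_i}\cap\{u_i<\e\}$, but we \emph{create} new jump set along the (reduced) boundary of $\{u_i \ge \e\}$, where $v_i$ jumps from a value $\ge \e$ down to $0$, contributing roughly $\beta \e^2 \mathcal H^{d-1}(\partial^e\{u_i\ge\e\}\cap\{\ldots\})$. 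The denominator decreases by $\int_{\{0<u_i<\e\}} u_i^2 \le \e^2 |\{0<u_i<\e\}|$. The heart of the matter is a careful energy estimate controlling the \emph{new} jump contribution against the \emph{gained} Dirichlet energy: via the coarea formula and an isoperimetric/Sobolev inequality applied to the superlevel sets of $u_i$ in the slab $\{t < u_i < \e\}$, one bounds $\e\,\mathcal H^{d-1}(\{u_i = t\})$-type quantities by $\int|\nabla u_i|$ there, and then runs a De Giorgi-type iteration on the quantities $\psi(\e) := \int_{\{0<u_i<\e\}}(|\nabla u_i|^2 + u_i^2)\,dx$ (together with the measure $|\{0<u_i<\e\}|$). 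Minimality of $u$ yields a differential inequality for $\psi$ forcing $\psi(\e)=0$ for $\e$ below a threshold $\alpha_i$, i.e.\ $u_i$ cannot take values in $(0,\alpha_i)$ on a set of positive measure, which is exactly $u_i \ge \alpha_i$ a.e.\ on its support. The paper itself signals that this iteration scheme goes back to \cite{BL14} and was refined in \cite{BG16}; the work here is to adapt it to the multiphase quotient, checking in particular that freezing the phases $u_j$ ($j\neq i$) and the normalization $\int u_i^2 = 1$ are compatible with the scheme, and that the constants $c_\alpha$, $\Lambda_1$, $\Lambda_2$ implicit in the estimate are uniform. I would present the lower bound as the core lemma and the upper bound as a short warm-up.
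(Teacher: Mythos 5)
Your competitors --- $u_i \wedge M$ for the upper bound and $u_i \1_{\{u_i \ge \varepsilon\}}$ for the lower bound, freezing the other phases --- are exactly the paper's, and you correctly identify the iteration scheme of \cite{BL14} and \cite{BG16} as the engine behind the lower bound. Two points in your sketch, however, do not match what actually closes the argument.

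\emph{Upper bound.} After normalizing $\int u_i^2\,dx=1$, testing with $u_i\wedge M$ and using $\frac{1}{1-\delta}\le 1+2\delta$ gives, schematically,
$$
\int_{\{u_i>M\}}|\nabla u_i|^2\,dx + (\text{jump savings}) \ \lesssim\ \Lambda\int_{\{u_i>M\}}(u_i^2-M^2)\,dx,
$$
with $\Lambda$ the value of the Rayleigh quotient of $u_i$ and with every quantity nonnegative; no rearrangement of this produces $c\int_{A_M}(u_i^2-M^2)\le 0$, so minimality alone does not instantly force $|A_M|=0$. The paper instead argues by contradiction (assume $|\{u_i>M\}|>0$ for every $M$) and invokes the nontrivial iterative argument of \cite[Theorem 6.11]{BG15} to conclude. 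The upper bound is therefore not the self-contained warm-up you anticipate, and your direct claim contains a genuine gap.

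\emph{Lower bound.} The key ingredient missing from your outline is the Faber--Krahn inequality \eqref{f:FK}. After testing with $u_i\1_{\{u_i\ge\varepsilon\}}$ and rearranging as above, the term one must absorb is $C\int_{\{u_i<\varepsilon\}}u_i^2\,dx$; bounding it by $\varepsilon^2|\{0<u_i<\varepsilon\}|$ as you propose leaves an extra volume term that is not controlled by the scheme. The paper instead applies \eqref{f:FK} to $u_i\1_{\{u_i<\varepsilon\}}$, obtaining
$$
\int_{\{u_i<\varepsilon\}}u_i^2\,dx \ \le\ \frac{1}{\lambda_1(\{u_i<\varepsilon\}^*,\beta)}\Big(\int_{\{u_i<\varepsilon\}}|\nabla u_i|^2\,dx + \beta\int_{J_{u_i}\cap\,\partial^e\{u_i<\varepsilon\}}\big[(u_i^+)^2+(u_i^-)^2\big]\,d\mathcal H^{d-1}\Big),
$$
where $\{u_i<\varepsilon\}^*$ is a ball whose volume $|\{0<u_i<\varepsilon\}|$ tends to zero as $\varepsilon\to 0$. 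Since the Robin eigenvalue of a shrinking ball blows up, the coefficient can be made $\le 1/2$ for $\varepsilon$ below some $\varepsilon_0$, and the whole term is absorbed into the left-hand side. This is exactly how one reaches the clean key estimate $E(\varepsilon)+c_1\delta^2\gamma(\delta,\varepsilon)\le c_2\varepsilon^2 h(\varepsilon)$ on which the iteration of \cite[Theorem 3.5]{BG16} operates. The Faber--Krahn absorption --- not coarea plus relative isoperimetry --- is the mechanism controlling the denominator change, and it is the distinctive Robin-specific step of the proof. Your description of a De Giorgi-type iteration on $\psi(\varepsilon)$ is a plausible account of what happens \emph{after} this estimate is secured, but without Faber--Krahn you cannot reach that stage.
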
 
\begin{proof} 
Let us derive the lower and upper bounds separately.
\vskip10pt\noindent
{\it Lower bound.} Let $i$ be a fixed index in $\{1, \dots, k \}$. In order to obtain the lower bound in \eqref{f:Bounds}, it is enough to show that there exists $\eta_0$ sufficiently small such that \begin{equation}\label{f:eta} 
\Big | \Big \{ \frac{2}{3} \eta  < u _i < \frac{5}{6} \eta  \Big \} \Big | = 0
\qquad \forall \eta < \eta _0 \,.
\end{equation}
To that aim, we implement the same iteration scheme exploited in the proof of \cite[Theorem 3.5]{BG16}. 
As noticed in \cite[Remark 3.7]{BG16}, such scheme successfully leads to \eqref{f:eta} as soon as one is able to prove the following key estimate 
\begin{equation}\label{f:stimachiave}
E (\e) + c _ 1 \d ^ 2 \gamma (\d, \e) \leq c _ 2 \e ^ 2 h ( \e) \qquad \text{ for a.e. } 0< \d < \e \leq \e _0\,,
\end{equation} 
where
$$\begin{array}{ll} 
& E(\e):= \int _{\{u _i < \e\} } |\nabla u_i| ^ 2 \, dx 
\\
\noalign{\bigskip} 
& \gamma (\d, \e):= \mathcal H ^ { d-1} ( \partial ^ e \{\d < u _i < \e \}  \cap J _{u_i}) 
\\ \noalign{\bigskip}
& h (\e ):= \mathcal H ^ {d-1} (\partial ^ e  \{ u_i \geq \e \} \setminus J _{u_i} ) \, .  
\end{array}
$$ 
Thus, we  limit ourselves to show that \eqref{f:stimachiave} is fulfilled: once this is gained,
to obtain \eqref{f:eta} one can follow exactly the proof of \cite[Theorem 3.5]{BG16}. 

In order to prove \eqref{f:stimachiave}, we compare $u = (u _1, \dots, u _k)$ with $v = (v_1, \dots , v _k)$ defined by 
$$v_j = u _ j \text{  if } j \neq i \quad \text{ and } \quad v _i = u _i \1 _{\{u _i \geq  \e \}}\,.$$ 
Assuming with no loss of generality that $\int _{\R ^ d } u _i ^ 2 = 1$, by the optimality of $u$ we obtain
$$   \begin{array}{ll}
&  \displaystyle
 \int_{\R ^d} |\nabla u_i | ^ 2  \, dx+ \beta \int_{J _{u _i}} \big [(u_i ^+) ^ 2 + (u_i ^-) ^ 2  \big ] \, d \mathcal H ^ { d-1} 
 \leq  \frac{1}{1-\int _{\{u _i <  \e \} }u_i ^ 2 \, dx } \cdot 
 \\ \noalign{\bigskip} 
& \displaystyle 
\left[ \int_{\{u _i \geq  \e \}} |\nabla u_i | ^ 2  \, dx+ \beta \int_{J _{u _i} \cap \{\e \leq u _i ^ - < u _i ^+ \} } \big [(u_i ^+) ^ 2 + (u_i ^-) ^ 2  \big ] \, d \mathcal H ^ { d-1} \right.
\\ \noalign{\bigskip} 
& \displaystyle 
\left.
+ \beta \int_{J _{u _i} \cap \{u _i ^ - < \e \leq u _i ^+ \} } (u_i ^+) ^ 2   \, d \mathcal H ^ { d-1}   + \beta \e ^ 2 \mathcal H ^ {d-1} 
\big ( \partial ^ e \{ u _ i \geq \e \} \setminus J _{u _i}  \big )
\right]\,.
\end{array}
$$ 
Using the inequality $\frac{1}{1- \d} \leq 1 + 2 \d$ holding for $\d \in [0, \frac{1}{2}]$, we deduce that there exists $\e _0 >0$ such that, for $\e < \e _0$, 
$$   \begin{array}{ll}
&  \displaystyle
 \int_{\R ^d} |\nabla u_i | ^ 2  \, dx+ \beta \int_{J _{u _i}} \big [(u_i ^+) ^ 2 + (u_i ^-) ^ 2  \big ] \, d \mathcal H ^ { d-1} 
 \leq \left[ 1 +2 \int _{\{u _i <  \e \} }u_i ^ 2 \, dx \right] \cdot 
 \\ \noalign{\bigskip} 
& \displaystyle 
\left[ \int_{\{u _i \geq  \e \}} |\nabla u_i | ^ 2  \, dx+ \beta \int_{J _{u _i} \cap \{\e \leq u _i ^ - < u _i ^+ \} } \big [(u_i ^+) ^ 2 + (u_i ^-) ^ 2  \big ] \, d \mathcal H ^ { d-1} \right.
\\ \noalign{\bigskip} 
& \displaystyle 
\left.+ \beta \int_{J _{u _i} \cap \{u _i ^ - < \e \leq u _i ^+ \} } (u_i ^+) ^ 2   \, d \mathcal H ^ { d-1}   + \beta \e ^ 2 \mathcal H ^ {d-1} 
\big ( \partial ^ e \{ u _ i \geq \e \} \setminus J _{u _i}  \big )
\right]\,.
\end{array}
$$ 
In turn, since 
$$ \begin{array}{ll} & \displaystyle \int_{\{u _i \geq  \e \}} |\nabla u_i | ^ 2  \, dx+ \beta \int_{J _{u _i} \cap \{\e \leq u _i ^ - < u _i ^+ \} } \big [(u_i ^+) ^ 2 + (u_i ^-) ^ 2  \big ] \, d \mathcal H ^ { d-1}  
 \\  \noalign{\bigskip} 
& \displaystyle +  \beta \int_{J _{u _i} \cap \{u _i ^ - < \e \leq u _i ^+ \} } (u_i ^+) ^ 2   \, d \mathcal H ^ { d-1}   \leq \int_{\R^d} |\nabla u_i | ^ 2  \, dx+ \beta \int_{J _{u _i}  } \big [(u_i ^+) ^ 2 + (u_i ^-) ^ 2  \big ] \, d \mathcal H ^ { d-1}, \end{array}
$$ 
it follows that, for a positive constant $C$ (depending on $u$ but independent of $\e$), 
$$   \begin{array}{ll}
&  \displaystyle
 \int_{\{u _ i < \e\}} |\nabla u_i | ^ 2  \, dx+ \beta \int_{J _{u _i} \cap \partial ^ e \{ u _i < \e \}} \big [(u_i ^+) ^ 2 + (u_i ^-) ^ 2  \big ] \, d \mathcal H ^ { d-1} 
 \\ \noalign{\bigskip} 
&  \leq 
\displaystyle 
\left[ 1 +2 \int _{\{u _i <  \e \} }u_i ^ 2 \, dx \right] \cdot  \Big [ \beta \e ^ 2 \mathcal H ^ {d-1} 
\big ( \partial ^ e \{ u _ i \geq \e \} \setminus J _{u _i}  \big )
\Big ] + C  \int _{\{u _i <  \e \} }u_i ^ 2 \, dx 
 \\ \noalign{\bigskip} 
& \displaystyle 
\le 3 \beta \e ^ 2 \mathcal H ^ {d-1} 
\big ( \partial ^ e \{ u _ i \geq \e \} \setminus J _{u _i}  \big )
 + C  \int _{\{u _i <  \e \} }u_i ^ 2 \, dx
\,.
\end{array}
$$ 
Next we observe that, thanks to the Faber-Krahn inequality \eqref{f:FK}, 
\begin{multline*}
\int _{\{u _i <  \e \} }u_i ^ 2 \, dx\\
 \leq \frac{1}{\lambda _1( \{u _i <  \e \}^* , \beta) } \left[ 
\int_{\{u _ i < \e\}} |\nabla u_i | ^ 2  \, dx+ \beta \int_{J _{u _i} \cap \partial ^ e \{ u _i < \e \}} \big [(u_i ^+) ^ 2 + (u_i ^-) ^ 2  \big ] \, d \mathcal H ^ { d-1}  \right],
\end{multline*}
where $\{u _i <  \e \}^*$ denotes a ball having the same volume as $\{u _i <  \e \}$. We can choose $\e$ so small that
$$\frac{C}{\lambda _1( \{u _i <  \e \}^* , \beta) } \leq \frac{1}{2}\,.$$ 
Therefore, up to reducing $\e _0$, for  $0< \d < \e < \e _0$ it holds 
$$    \int_{\{u _ i < \e\}} |\nabla u_i | ^ 2  \, dx+ \beta \d ^ 2 \mathcal H ^ {d-1} \big ( J _{u _i} \cap \partial ^ e \{\d <  u _i < \e \} \big )
 \leq 
6 \beta \e ^ 2 \mathcal H ^ {d-1} 
\big ( \partial ^ e \{ u _ i \geq \e \} \setminus J _{u _i}  \big )\,.
$$
We have thus shown the validity of \eqref{f:stimachiave}.

\medskip\noindent
{\it Upper bound.} 
 Let $i$ be a fixed index in $\{1, \dots, k \}$. Assume by contradiction that $u _i \not \in L ^ \infty (\R ^d)$.  
 For every $M >0$, we compare $u = (u _1, \dots, u _k)$ with $v = (v_1, \dots , v _k) \in \mathcal F (D)$ defined by 
$$v_j = u _ j \text{  if } j \neq i \quad \text{ and } \quad v _i = u _i \wedge M \,.$$  
By the optimality of $u$, we have
\begin{multline*}
\int_{\R ^d} |\nabla u_i | ^ 2  \, dx+ \beta \int_{J _{u _i}} \big [(u_i^+) ^ 2 + (u_i ^-) ^ 2  \big ] \, d \mathcal H ^ { d-1}\\
\le 
\frac{ \int _{\R^d} u_i ^ 2 \, dx } { \int _{\R^d} ( u _i \wedge M )^ 2 \, dx } \cdot \left[   \int_{\{u _i \leq M \}} |\nabla u_i | ^ 2  \, dx+\beta \int_{J _{u _i} \cap \{u _i ^- < u _i ^+ \leq M \}}  \big [(u_i ^+) ^ 2 + (u_i ^-) ^ 2  \big ] \, d \mathcal H ^ { d-1}\right.\\
 \left. + \beta \int_{J _{u _i} \cap \{u _i ^- <  M  < u _i ^+  \} } \big [M ^ 2 + (u_i ^-) ^ 2  \big ] \, d \mathcal H^{d-1} \right].
\end{multline*}
The above inequality leads to a contradiction by exploiting the assumption that $|\{ u _i > M \} | >0$ for every $M$ and arguing as in the proof of \cite[Theorem 6.11]{BG15}. \end{proof} 

\subsection { SBV regularity and finite $\mathcal H ^ {d-1}$ measure of the jump sets}
   \begin{proposition}  If $u= (u _1, \dots, u_k)\in \mathcal F (D) $ is a solution to problem $(\overline P)$, for every $i = 1, \dots, k$ the function $u_i$ satisfies $\mathcal H ^ {d-1} (J _{u _i}) < + \infty$ and belongs to $SBV  (\R ^d)$. 
\end{proposition}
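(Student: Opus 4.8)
The plan is to deduce $SBV$ regularity of each $u_i$ directly from the two-sided bounds in Proposition \ref{p:bounds}. The key observation is that $w_i := u_i^2 \in SBV(\R^d)$ by definition of $SBV^{1/2}(\R^d)$, so all we need is to upgrade from $u_i^2 \in SBV$ to $u_i \in SBV$ and, along the way, to bound $\mathcal H^{d-1}(J_{u_i})$.

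First I would record the relationship between $J_{u_i}$ and $J_{w_i}$: since $t \mapsto \sqrt t$ is locally Lipschitz away from $0$, one has $J_{u_i} \subseteq J_{w_i}$ up to $\mathcal H^{d-1}$-null sets, and in fact $J_{u_i} = J_{w_i}$ because $w_i$ jumps exactly where $u_i$ does (both traces of $u_i$ are comparable to those of $w_i$ via the square root). On $J_{u_i}$ we have the lower bound: by Proposition \ref{p:bounds}, wherever $u_i > 0$ it is bounded below by $\alpha_i$, so on the jump set the larger trace $u_i^+ \ge \alpha_i$, hence $(u_i^+)^2 \ge \alpha_i^2$. Consequently
$$
\alpha_i^2 \, \mathcal H^{d-1}(J_{u_i}) \le \int_{J_{u_i}} \big[(u_i^+)^2 + (u_i^-)^2\big]\, d\mathcal H^{d-1} \le C,
$$
where the last bound is the finiteness of the energy of the minimizer $u$ (each addendum of the energy in $(\overline P)$ is finite since $u$ is a solution). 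This gives $\mathcal H^{d-1}(J_{u_i}) < +\infty$ immediately.

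Next, to get $u_i \in SBV(\R^d)$: we already know $u_i \in L^2(\R^d) \cap L^\infty(\R^d)$ by the upper bound $u_i \le M_i$, and $u_i$ is approximately differentiable a.e.\ with $\nabla(u_i^2) = 2 u_i \nabla u_i$, so $\nabla u_i \in L^2(\{u_i > 0\})$; combined with $u_i$ compactly supported in $\overline D$ this yields $\nabla u_i \in L^1(\R^d)$. The jump part of the would-be gradient is $(u_i^+ - u_i^-)\nu_{u_i}\mathcal H^{d-1}\res J_{u_i}$, which is a finite measure since $u_i$ is bounded and $\mathcal H^{d-1}(J_{u_i}) < +\infty$. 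The clean way to conclude is a chain-rule/composition argument for $BV$ functions (\cite{AFP}, the $BV$ chain rule for Lipschitz functions): since $w_i \in SBV(\R^d) \cap L^\infty$ with values in a bounded interval, and $\phi(t) = \sqrt{\max(t,0)}$ restricted to that interval is Lipschitz on $[\alpha_i^2, M_i^2] \cup \{0\}$ — the only delicate point being the behaviour near $0$, which is harmless because $u_i$ never takes values in $(0,\alpha_i)$ — the composition $u_i = \phi(w_i)$ lies in $SBV(\R^d)$, with $Du_i = \phi'(w_i)\nabla w_i \, dx + (\phi(w_i^+) - \phi(w_i^-))\nu_{w_i}\mathcal H^{d-1}\res J_{w_i}$ and no Cantor part. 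I would make the truncation rigorous by working with $u_i \vee \tfrac{\alpha_i}{2}$ on $\{u_i>0\}$, applying the Lipschitz chain rule there, and noting that $u_i = (u_i \vee \tfrac{\alpha_i}{2})\1_{\{u_i>0\}}$ differs from the smooth-side function only by multiplication by the characteristic function of the support, which has finite perimeter (its essential boundary is $\mathcal H^{d-1}$-contained in $J_{u_i}$, of finite measure).

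I expect the main — though modest — obstacle to be the careful handling of the square-root composition near zero: the naive Lipschitz chain rule fails because $\sqrt{\cdot}$ is not Lipschitz at $0$. The resolution is exactly the content of Step II: the lower bound $u_i \ge \alpha_i$ on its support means $w_i = u_i^2$ stays away from $0$ on $\{w_i > 0\}$, so the relevant values of $w_i$ lie in $\{0\} \cup [\alpha_i^2, M_i^2]$, a set on which $\sqrt{\cdot}$ \emph{is} Lipschitz. Everything else (finiteness of $\mathcal H^{d-1}(J_{u_i})$, $L^1$ bound on $\nabla u_i$, absence of Cantor part inherited from $w_i \in SBV$) is then routine. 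This also matches the paper's own remark that the statement is "a quite direct consequence of the previous step."
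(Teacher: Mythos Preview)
Your argument for $\mathcal H^{d-1}(J_{u_i}) < +\infty$ is identical to the paper's: bound the jump integral from below by $\beta\alpha_i^2\,\mathcal H^{d-1}(J_{u_i})$ using the lower bound from Proposition~\ref{p:bounds}. For the $SBV$ membership, however, you take a different route. The paper approximates $u_i$ by $u_i^\varepsilon := (u_i^2 + \varepsilon)^{1/2}$, which lies in $SBV$ on bounded sets since $t\mapsto\sqrt{t+\varepsilon}$ is Lipschitz on $[0,\infty)$; it then checks that $\int|\nabla u_i^\varepsilon|^2$, $\mathcal H^{d-1}(J_{u_i^\varepsilon})$ and $\|u_i^\varepsilon\|_\infty$ are bounded uniformly in $\varepsilon$ and invokes Ambrosio's compactness theorem to conclude $u_i\in SBV$. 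Your approach instead exploits the lower bound a second time: since $u_i^2$ takes values only in $\{0\}\cup[\alpha_i^2,M_i^2]$, you can replace $\sqrt{\cdot}$ by a globally Lipschitz function $\tilde\phi$ agreeing with $\sqrt{\cdot}$ on that set and apply the Lipschitz chain rule for $SBV$ once to get $u_i=\tilde\phi(u_i^2)\in SBV(\R^d)$. This is correct and more economical---no compactness, no approximating family---at the price of leaning more heavily on the specific structure (the gap $(0,\alpha_i)$ in the range of $u_i$). The paper's approximation argument, by contrast, would still produce $u_i\in SBV_{\rm loc}$ even without the lower bound, provided one had $\mathcal H^{d-1}(J_{u_i})<+\infty$ from some other source.
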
 
\begin{proof} Up to a normalization, we can assume without loss of generality that $\int _{ \R ^ d} u _i ^ 2 \, dx = 1$ for every $i = 1, \dots, k$. 
Since
$$ \sum _{i=1} ^k  \int_{\R ^d} |\nabla u_i | ^ 2  \, dx+ \beta \int_{J _{u _i}} \big [(u_i ^+) ^ 2 + (u_i ^-) ^ 2  \big ] \, d \mathcal H ^ { d-1}  < + \infty\,,$$ and $u _i \geq \alpha _i >0$ a.e.\ on ${\rm supp} (u _i)$ for every $i = 1, \dots, k$, we have
$$\beta \alpha _i ^ 2 \mathcal H ^ {d-1} (J _{u _i}) \leq 
\beta \int_{J _{u _i}} \big [(u_i ^+) ^ 2 + (u_i ^-) ^ 2  \big ] \, d \mathcal H ^ { d-1}  < + \infty \qquad \forall i = 1, \dots, k \,.$$   
In order to prove that $u_i \in SBV (\R ^d)$, we consider the sequence $u _i ^ \e := ( u _i ^ 2 + \e ) ^ {1/2}$.  
If $A$ is an open bounded subset of $\R ^d$, we have $u _i ^ \e \in SBV (A)$ and 
$$\begin{array}{ll} & \sup _{0<\e<1}  \displaystyle \left[ \int _A |\nabla u _i ^ \e | ^ 2 \, dx + \mathcal H ^ {d-1} ( J _{u_i ^ \e } \cap A)  + \| u _i ^ \e \| _{ L ^ \infty (A)} \right]  \\  \noalign{\medskip} 
& \leq\displaystyle \left[ \int _A |\nabla u _i | ^ 2 \, dx + \mathcal H ^ {d-1} ( J _{u_i } \cap A)  + (\| u _i \| ^2_{ L ^ \infty (A)} +1) ^ {1/2} \right] < + \infty,
\end{array}$$
where the last inequality follows from the upper bound in Proposition \ref{p:bounds}. 
By  Ambrosio's compactness theorem \cite{Amb}, since $u _i ^ \e \to u _i$ in $L ^1(A)$,  we deduce that $u_i \in SBV (A)$. 
Moreover, the estimate
$$ \begin{array}{ll} |Du _i| (A)&  \displaystyle \leq \int _A |\nabla u _i | \, dx + 2 \|u _i \| _{\infty} \mathcal H ^ {d-1} ( J _{u _i} \cap A) 
\\ \noalign{\medskip}  
& \leq |D| ^ {1/2}\Big (  \int _{\R ^ d}  |\nabla u _i | ^2 \, dx \Big ) ^ {1/2}  + 2 \|u _i \| _{\infty} \mathcal H ^ {d-1} ( J _{u _i} )\end{array} 
 $$ ensures that $|Du _i | (\R ^d) < + \infty$ and therefore $u _i \in SBV ( \R ^d)$. \end{proof}
 
\subsection{Essential closedness of the jump sets} In the following we denote by $\omega_i$ the support of $u_i$. We will refer to $\{\omega_1,\dots,\omega_k\}$ as the phases of our problem.

\begin{lemma}[\bf Density lower bound for the phases]
\label{p:density}   If $u= (u _1, \dots, u_k)\in \mathcal F (D) $ is a solution to problem $(\overline P)$,  for every $i = 1, \dots, k$ there exist a constant $c_i>0$ and a radius $ \rho _i >0$ such that 
the following property holds true: for every $x \in\R^d$ such that $|B_\rho(x)\cap \omega_i|>0$ for every $\rho>0$, we have
\begin{equation}
\label{f:lowerd}
\frac {|\omega_i \cap B _\rho (x)|  }{\rho ^ d} \geq c _i\qquad \text{for every }\rho \in (0, \rho _i).
\end{equation}

\end{lemma}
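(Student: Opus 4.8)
The plan is to prove the density lower bound \eqref{f:lowerd} by a standard contradiction-and-rescaling argument, exploiting the optimality of $u$ together with the Faber-Krahn inequality \eqref{f:FK}. Fix an index $i$ and abbreviate $u=u_i$, $\omega=\omega_i$, $\alpha=\alpha_i$, $M = M_i$ (the constants from Proposition~\ref{p:bounds}). The key quantity to control is
$$
m(\rho) := |\omega \cap B_\rho(x)|,
$$
and the goal is a differential inequality forcing $m(\rho) \ge c\,\rho^d$ for small $\rho$ whenever $m(\rho)>0$ for all $\rho>0$.

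First I would construct a competitor by ``erasing'' $u$ inside the ball: for a.e.\ $\rho$ small, set $v_i = u\,\1_{\R^d \setminus B_\rho(x)}$ and $v_j = u_j$ for $j\neq i$. This is admissible in $\mathcal F(D)$ (supports only shrink, so disjointness and the box constraint are preserved). Plugging $v$ into $(\overline P)$ and using the optimality of $u$ exactly as in the lower-bound argument of Proposition~\ref{p:bounds} (with the inequality $\frac{1}{1-\delta}\le 1+2\delta$ and the bound $\int u^2 \ge $ const on its support), one obtains, for a constant $C$ independent of $\rho$,
$$
\int_{B_\rho(x)} |\nabla u|^2\,dx + \beta \int_{J_u \cap B_\rho(x)} \big[(u^+)^2+(u^-)^2\big]\,d\mathcal H^{d-1}
\;\le\; C\Big[ \beta\,M^2\,\mathcal H^{d-1}\big(\partial B_\rho(x)\cap \omega\big) + \int_{B_\rho(x)} u^2\,dx \Big].
$$
The surface term $\mathcal H^{d-1}(\partial B_\rho(x)\cap\omega)$ equals $m'(\rho)$ for a.e.\ $\rho$ by the coarea formula. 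To absorb the last term, apply the Faber-Krahn inequality \eqref{f:FK} to the restriction of $u$ to $B_\rho(x)$: since $u\,\1_{B_\rho(x)} \in SBV^{1/2}$ with support of measure $m(\rho)$,
$$
\int_{B_\rho(x)} u^2\,dx \;\le\; \frac{1}{\lambda_1\big((B_\rho(x))^*,\beta\big)} \left[ \int_{B_\rho(x)}|\nabla u|^2\,dx + \beta\int_{J_u\cap B_\rho(x)}\big[(u^+)^2+(u^-)^2\big]\,d\mathcal H^{d-1} + \beta M^2 m'(\rho) \right],
$$
where $(B_\rho(x))^*$ is the ball of volume $m(\rho)$; here one must be slightly careful that the jump contribution from the artificial boundary $\partial B_\rho(x)$ is bounded by $\beta M^2 m'(\rho)$. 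Since $\lambda_1$ of a ball scales like (radius)$^{-2}$, we have $\lambda_1((B_\rho(x))^*,\beta) \to +\infty$ as $m(\rho)\to 0$, so for $\rho$ small enough the factor $C/\lambda_1((B_\rho(x))^*,\beta) \le \tfrac12$, and the $\int u^2$ term is absorbed into the left-hand side.

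The outcome is a Caccioppoli-type inequality of the form, valid for a.e.\ small $\rho$,
$$
\int_{B_\rho(x)}|\nabla u|^2\,dx + \beta\alpha^2\,\mathcal H^{d-1}\big(J_u \cap B_\rho(x)\big) \;\le\; C'\, m'(\rho).
$$
Now I would invoke the relative isoperimetric inequality in $B_\rho(x)$ applied to the set $\omega \cap B_\rho(x)$ (equivalently, run the standard chain: $u \ge \alpha$ on $\omega$ and $u=0$ off $\omega$, so $\alpha$ times the perimeter of $\{u>0\}$ in $B_\rho$ is controlled by $|Du|(B_\rho)$, which by the above is controlled by $m'(\rho)$). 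This yields
$$
\min\{m(\rho),\; \omega_d\rho^d - m(\rho)\}^{\frac{d-1}{d}} \;\le\; C''\, \mathrm{Per}\big(\omega\cap B_\rho(x);\, B_\rho(x)\big) \;\le\; C'''\, m'(\rho),
$$
and in the regime where $m(\rho)$ is small compared to $\rho^d$ the minimum is $m(\rho)$, giving the differential inequality $m(\rho)^{(d-1)/d} \le C''' m'(\rho)$. Integrating this from $0$ (using $m(0^+)=0$, which holds in the nontrivial case) produces $m(\rho) \ge c\,\rho^d$ for $\rho < \rho_i$, which is precisely \eqref{f:lowerd}. If instead $m(\rho) \ge \tfrac12\omega_d\rho^d$ already at some scale, the bound is trivial; a routine continuity/dichotomy argument on $\rho$ reconciles the two cases.

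The main obstacle I anticipate is bookkeeping the jump terms carefully across the cut boundary $\partial B_\rho(x)$: when one replaces $u$ by $u\,\1_{\R^d\setminus B_\rho(x)}$, a new portion of jump set appears on $\partial B_\rho(x)\cap\omega$ with trace values bounded by $M$, and one must verify that these contributions are genuinely controlled by $m'(\rho) = \mathcal H^{d-1}(\partial B_\rho(x)\cap\omega)$ and that the ``good'' set $\{\e \le u^- < u^+\}$ versus $\{u^- < \e \le u^+\}$ splitting (as in Proposition~\ref{p:bounds}) does not cost anything here since we are cutting to zero rather than truncating. A secondary technical point is ensuring the Faber-Krahn step is legitimate for the restricted function $u\,\1_{B_\rho(x)}$, whose support may touch $\partial B_\rho(x)$; but since \eqref{f:FK} is stated for all of $SBV^{1/2}(\R^d)$ with a volume constraint, this causes no difficulty once the artificial jump on $\partial B_\rho(x)$ is accounted for in the numerator.
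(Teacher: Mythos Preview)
Your proposal is correct and follows essentially the same architecture as the paper's proof: the competitor $v_i=u_i\1_{\R^d\setminus B_\rho(x)}$, the optimality inequality expanded via $\tfrac{1}{1-\delta}\le 1+2\delta$, the use of the Faber--Krahn inequality \eqref{f:FK} applied to $u_i\1_{B_\rho}$ to handle the $\int_{B_\rho}u_i^2$ term, and the resulting differential inequality $m(\rho)^{1-1/d}\le C\,m'(\rho)$ to be integrated.

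Two small remarks. First, your statement that ``$\lambda_1$ of a ball scales like (radius)$^{-2}$'' is the Dirichlet scaling; for the Robin eigenvalue one has $\lambda_1(B_r,\beta)\sim C/r$ as $r\to 0$ (this is exactly the asymptotic the paper records). Your argument is unaffected because you only use $\lambda_1\to\infty$ to absorb the zero-order term. Second, there is a minor but genuine difference in how the exponent $1-\tfrac1d$ is produced: the paper combines the asymptotic $\lambda_1(\{\omega_i\cap B_\rho\}^*,\beta)\sim C'|\omega_i\cap B_\rho|^{-1/d}$ with the two-sided bounds $\alpha_i^2\,m(\rho)\le\int_{B_\rho}u_i^2\le M_i^2\,m(\rho)$ to read off $m^{1-1/d}\lesssim m'$ directly, whereas you first absorb $\int u_i^2$, then invoke the relative isoperimetric inequality on $\omega_i$ in $B_\rho$ (using $\partial^e\omega_i\subseteq J_{u_i}$, which is available from the lower bound of Proposition~\ref{p:bounds}). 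Your route costs the extra dichotomy on whether $m(\rho)\gtrless\tfrac12|B_\rho|$, which the paper's direct route avoids; otherwise the two arguments are interchangeable.
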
 
\begin{proof}  Let $i \in \{1, \dots, k\}$ be fixed, and let $x \in\R^d$ such that $|B_\rho(x)\cap \omega_i|>0$ for every $\rho>0$. 
We compare $u = (u _1, \dots, u _k)$ with $v = (v_1, \dots , v _k) \in \mathcal F (D)$ defined by 
$$
v_j = u _ j \text{  if } j \neq i \quad \text{ and } \quad v _i = u _i \1_{\R ^ d \setminus B _\rho (x)}  \,.
$$  

We assume without loss of generality that $\int _{\R ^ d} u _i ^ 2 = 1$, and we write for brevity $B _\rho$ in place of $B _\rho (x)$. 
In order to compute the energy of $v$ we apply Theorem 3.84 in \cite{AFP}, and in particular we denote by $(u_{i, \partial B _\rho} ^+) ^ 2$ the outer trace of $u _i ^2$ on $\partial B _\rho$ defined  for
 $\mathcal  H ^ { d-1}$-a.e.\ $x \in \partial B _\rho$ 
 according to \eqref{f:traces}. 
  By the optimality of $u$ we obtain
\begin{multline*}
 \int_{\R ^d} |\nabla u_i | ^ 2  \, dx+ \beta \int_{J _{u _i}} \big [(u_i ^+) ^ 2 + (u_i ^-) ^ 2  \big ] \, d \mathcal H ^ { d-1}\\
\leq\displaystyle \frac{\int_{\R ^d \setminus B _\rho } |\nabla u_i | ^ 2  \, dx+ \beta \int_{J _{u _i} \setminus B _\rho} \big [(u_i ^+) ^ 2 + (u_i ^-) ^ 2  \big ] \, d \mathcal H ^ { d-1} +  \beta \int_{\partial B _\rho } (u_{i, \partial B _\rho} ^+) ^ 2  \, d \mathcal H ^ { d-1} }{ 1- \int _{B _\rho}   u _ i ^ 2\,dx} \,.
\end{multline*}
Using the inequality $\frac{1}{1- \d} \leq 1 + 2 \d$ holding for $\d \in [0, \frac{1}{2}]$, we deduce that there exists $\rho _0 >0$ such that, for $\rho< \rho _0$, 
$$   
\begin{array}{ll}
&  \displaystyle
 \int_{\R ^d} |\nabla u_i | ^ 2  \, dx+ \beta \int_{J _{u _i}} \big [(u_i ^+) ^ 2 + (u_i ^-) ^ 2  \big ] \, d \mathcal H ^ { d-1} 
 \leq \left[ 1 +2 \int _{B _\rho }u_i ^ 2 \, dx \right] \cdot 
 \\ \noalign{\bigskip} 
& \displaystyle 
\cdot\left[\int_{\R ^d \setminus B _\rho } |\nabla u_i | ^ 2  \, dx+ \beta \int_{J _{u _i} \setminus B _\rho} \big [(u_i ^+) ^ 2 + (u_i ^-) ^ 2  \big ] \, d \mathcal H ^ { d-1} +  \beta \int_{\partial B _\rho }(u_{i, \partial B _\rho} ^+) ^ 2  \, d \mathcal H ^ { d-1} 
\right]\,.
\end{array}
$$ 
 In turn, since 
\begin{multline*}
\int_{\R ^d \setminus B _\rho } |\nabla u_i | ^ 2  \, dx+ \beta \int_{J _{u _i} \setminus B _\rho} \big [(u_i ^+) ^ 2 + (u_i ^-) ^ 2  \big ] \, d \mathcal H ^ { d-1} \\
\leq \int_{\R^d} |\nabla u_i | ^ 2  \, dx+ \beta \int_{J _{u _i}  } \big [(u_i ^+) ^ 2 + (u_i ^-) ^ 2  \big ] \, d \mathcal H ^ { d-1},
\end{multline*}
and in view of the upper bound in \eqref{f:Bounds}, it follows that, for a positive constant $C$ independent of $\rho$, 
\begin{multline}
\label{f:upperb}
\displaystyle \int_{B _\rho} |\nabla u_i | ^ 2  \, dx+ \beta \int_{\overline B _\rho \cap J _{u _i}} \big [(u_i ^+) ^ 2 + (u_i ^-) ^ 2  \big ] \, d \mathcal H ^ { d-1} \\
\leq \displaystyle C \int _{B _\rho} u _i ^ 2 \,dx+ 3 \beta   \int_{\partial B _\rho }(u_{i, \partial B _\rho} ^+) ^ 2  \, d \mathcal H ^ { d-1}. 
\end{multline}
Adding to both sides the term  $\beta\int_{\partial B _\rho }(u_{i, \partial B _\rho} ^-) ^ 2  \, d \mathcal H ^ { d-1}$, where $u_{i, \partial B _\rho} ^-$ denotes the inner trace on $\partial B_\rho$, thanks to the Faber-Krahn inequality \eqref{f:FK} we obtain the estimate
\begin{multline}
\label{f:f1}
\lambda _ 1 (\{\omega _i \cap B _\rho\}^*,\beta)   \int_{B _\rho } u _ i ^ 2\,dx\\
\le\int_{B _\rho} |\nabla u_i | ^ 2  \, dx+ \beta \int_{B _\rho \cap J _{u _i}} \big [(u_i ^+) ^ 2 + (u_i ^-) ^ 2  \big ] \, d \mathcal H ^ { d-1}+
\beta \int_{\partial B _\rho }(u_{i, \partial B _\rho} ^-) ^ 2  \, d \mathcal H ^ { d-1} \\
\le C \int _{B _\rho} u _i ^ 2 \, dx + 3 \beta   \int_{\partial B _\rho } (u_{i, \partial B _\rho} ^-) ^ 2  +(u_{i, \partial B _\rho} ^+) ^ 2  \, d \mathcal H ^ { d-1},
\end{multline}
where $\{\omega _i \cap B _\rho\}^*$ denotes a ball with the same volume as $\omega _i \cap B _\rho$, that is with radius
\begin{equation}\label{f:f2}  
r _i:= \frac{|\omega _i \cap B _\rho| ^ {1/d}}{\omega _d} \,.
\end{equation}
Notice that there exists a positive constant $C'$ (independent of $x$) such that 
\begin{equation}\label{f:f3}
 \lambda _1 (B _{r_i}, \beta) \sim \frac{C'}{|\omega _i \cap B _\rho| ^ {1/d} } \qquad \text  { as } \rho \to 0 ^+ \,.
 \end{equation}  
Indeed, we have 
$$ 
\lambda _1 (B _{r_i}, \beta)  =  \lambda _1 (r_i B _1, \beta)  = \frac{1}{r_i^2} \lambda _ 1 (B_1, \beta r_i  ) = \frac{\beta}{r_i} \frac{1 }{ \beta r_i} \lambda _ 1 (B_1, \beta r _i) \sim \frac{\beta}{r_i}  \frac{|\partial B _1|}{|B_1|}.
$$ 
For the latter asymptotic equivalence, see \cite[Proposition 9]{bf17R} and references therein.
\par
Recalling the upper and lower bounds in \eqref{f:Bounds}, we deduce from \eqref{f:f1}, \eqref{f:f2}, and \eqref{f:f3} that there exists a positive constant $C''$ (independent of $x$) such that  the function  $\theta _i (\rho):=  \big |\omega _i \cap B _\rho \big | $ satisfies,  for $\rho$ sufficiently small, the differential inequality 
$$\theta _i ^ {1 - \frac{1}{d}} (\rho)= \big |\omega _i \cap B _\rho \big | ^ {1- \frac{1}{d}} \leq C''  \big |\omega _i \cap \partial B _\rho \big |  =   C'' \theta _i ' (\rho) \,.$$
 Hence $\theta _i (\rho) \geq C'' \rho ^ d$ for $\rho$ sufficiently small, and the proof of \eqref{f:lowerd} is achieved.  
 \end{proof}

\begin{corollary}
\label{cor:phases}
If $u= (u _1, \dots, u_k)\in \mathcal F (D) $ is a solution to problem $(\overline P)$,  for every $i = 1, \dots, k$ the following items hold true.
\begin{itemize}
\item[(a)] $\partial^e\omega_i\subseteq J_{u_i}$.
\item[(b)] $\omega_i^{(0)}$ is open.
\item[(c)] $C_i:=\R^d\setminus \omega_i^{(0)}$ is closed with  $J_{u_i}\subseteq C_i$ and such that for every $i\not=j$
\begin{equation}
\label{eq:equalityCi}
C_i\cap C_j=\partial^e\omega_i \cap \partial^e\omega_j=J_{u_i}\cap J_{u_j}.
\end{equation}
\end{itemize} 
\end{corollary}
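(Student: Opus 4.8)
The plan is to leverage the density lower bound from Lemma \ref{p:density} together with the lower bound $u_i \geq \alpha_i > 0$ on $\omega_i$ from Proposition \ref{p:bounds}, and to read off the inclusions from the structure of the jump set $J_{u_i}$. For item (a), I would argue pointwise: if $x \in \partial^e \omega_i$, then $\omega_i$ has neither density $0$ nor density $1$ at $x$, so $x$ belongs to the essential boundary $\partial^e \omega_i = J_{\1_{\omega_i}}$. Since $u_i \geq \alpha_i$ a.e.\ on $\omega_i$ and $u_i = 0$ a.e.\ outside $\omega_i$ (the support being, up to a null set, the set $\{u_i > 0\}$), the approximate upper limit $u_i^+(x)$ is at least $\alpha_i$ while the approximate lower limit $u_i^-(x)$ is $0$ at every point of positive density for $\R^d \setminus \omega_i$; hence $u_i^+(x) \neq u_i^-(x)$ and $x \in J_{u_i}$. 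For item (b), I would show $\omega_i^{(0)}$ is open by a complementary density argument: if $x \in \omega_i^{(0)}$, i.e.\ $\omega_i$ has density zero at $x$, then for $\rho$ small $|B_\rho(x) \cap \omega_i|/\rho^d < c_i$, so by the contrapositive of Lemma \ref{p:density} there is some $\rho$ with $|B_\rho(x) \cap \omega_i| = 0$; but then every point of $B_\rho(x)$ is a density-zero point of $\omega_i$, so $B_\rho(x) \subseteq \omega_i^{(0)}$.

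For item (c), closedness of $C_i = \R^d \setminus \omega_i^{(0)}$ is immediate from (b). The inclusion $J_{u_i} \subseteq C_i$ follows from (a) once one notes that $J_{u_i} \subseteq \partial^e\omega_i$ as well: indeed $u_i = 0$ a.e.\ off $\omega_i$, so if $x$ is a density-one point of $\omega_i$ then, by the essential boundedness $M_i \geq u_i \geq \alpha_i$ on $\omega_i$, the only possible jump values are both in $[\alpha_i, M_i]$ and, more to the point, at a density-one point of $\omega_i^{(1)}$ the blow-up of $u_i$ sees only $\omega_i$; combined with (a) this gives $J_{u_i} = \partial^e\omega_i$ modulo the reasoning that a jump point cannot have $\omega_i$ of density $0$ or $1$. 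Since $\partial^e\omega_i \subseteq \R^d \setminus \omega_i^{(0)} = C_i$, we get $J_{u_i} \subseteq C_i$. Finally, for the chain of equalities \eqref{eq:equalityCi} with $i \neq j$: the disjointness condition $u_i \cdot u_j = 0$ in $D$ forces $\omega_i \cap \omega_j$ to be Lebesgue-null, so at $\mathcal H^{d-1}$-a.e.\ point one set has density zero wherever the other has positive density. A point $x \in C_i \cap C_j$ is, by definition, a point where neither $\omega_i$ nor $\omega_j$ has density zero; using the local isoperimetric inequality together with $|\omega_i \cap \omega_j| = 0$, one shows that at such $x$ both sets must have density $< 1$ as well (their densities summing to at most $1$ and both being positive), hence $x \in \partial^e\omega_i \cap \partial^e\omega_j$; the reverse inclusions $\partial^e\omega_i \cap \partial^e\omega_j \subseteq C_i \cap C_j$ and $\partial^e\omega_i \cap \partial^e\omega_j \subseteq J_{u_i} \cap J_{u_j}$ come from $\partial^e\omega_i \subseteq C_i$ and $\partial^e\omega_i = J_{u_i}$ respectively, and $J_{u_i} \cap J_{u_j} \subseteq C_i \cap C_j$ from item (c)'s first part.

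The main obstacle I expect is the precise handling of item (a) and the identity $J_{u_i} = \partial^e\omega_i$: one must be careful that "support" is taken in the measure-theoretic sense (the set of density-positive points, or equivalently $\{u_i > 0\}$ up to null sets) so that the bound $u_i \geq \alpha_i$ genuinely propagates to the approximate limits, and one must invoke the rectifiability of $J_{u_i}$ and the coincidence of approximate limits with the traces \eqref{f:traces} to identify the jump values. The clean way is: a.e.\ on $\omega_i^{(1)}$ one has $u_i^+ = u_i^- \in [\alpha_i, M_i]$ (no jump), a.e.\ on $\omega_i^{(0)}$ one has $u_i^+ = u_i^- = 0$ (no jump), so $J_{u_i} \subseteq \partial^e\omega_i$ up to $\mathcal H^{d-1}$-null sets; conversely (a) gives $\partial^e\omega_i \subseteq J_{u_i}$ everywhere, yielding the $\mathcal H^{d-1}$-a.e.\ identity needed to chain with \eqref{eq:equalityCi}. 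The density estimate of Lemma \ref{p:density} is what upgrades this to the stated set-theoretic (not merely $\mathcal H^{d-1}$-a.e.) statements in (b) and (c).
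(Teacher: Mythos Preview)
Your overall strategy matches the paper's, and parts (a), (b) and the chain of inclusions in (c) are handled correctly. There is, however, a genuine error in your route to $J_{u_i}\subseteq C_i$: you try to deduce it from the stronger claim $J_{u_i}\subseteq \partial^e\omega_i$ (equivalently, that a jump point cannot lie in $\omega_i^{(1)}$), and this stronger claim is \emph{false}. Nothing prevents $u_i$ from jumping across an internal crack that sits entirely inside $\omega_i^{(1)}$: at such a point both traces lie in $[\alpha_i,M_i]$, they are different, and the density of $\omega_i$ is $1$. Your sentence ``a.e.\ on $\omega_i^{(1)}$ one has $u_i^+=u_i^-\in[\alpha_i,M_i]$ (no jump)'' is therefore not justified, and no appeal to Lemma~\ref{p:density} can upgrade it, since that lemma only gives a \emph{lower} density bound.

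The paper avoids this detour entirely. For $J_{u_i}\subseteq C_i$ it simply observes that any $x\in J_{u_i}$ has $u_i^+(x)>0$, hence $\{u_i>0\}=\omega_i$ has positive upper density at $x$, so $x\notin\omega_i^{(0)}$. For the equality \eqref{eq:equalityCi} the paper then argues exactly as you do in your parenthetical (no isoperimetric inequality needed): if $x\in C_i\cap C_j$ then, by Lemma~\ref{p:density}, both $\omega_i$ and $\omega_j$ have lower density $\ge c_i,c_j>0$ at $x$; since $|\omega_i\cap\omega_j|=0$ their densities sum to at most $1$, so neither has density $1$, whence $x\in\partial^e\omega_i\cap\partial^e\omega_j\subseteq J_{u_i}\cap J_{u_j}$ by (a); the reverse inclusion closes the cycle via $J_{u_i}\subseteq C_i$. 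Note that the identity $J_{u_i}=\partial^e\omega_i$ is neither true nor needed anywhere.
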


\begin{proof}
\par\noindent
\begin{itemize}
\item[(a)] The inclusion $\partial^e\omega_i \subseteq J_{u_i}$ comes from the lower bound in \eqref{f:Bounds}.

\item[(b)] Let $x\in \omega_i^{(0)}$, and assume by contradiction that there exists $x_n\not\in \omega_i^{(0)}$ with $x_n\to x$. Then $x_n$ is a point which satisfies Lemma \ref{p:density}. But then by \eqref{f:lowerd} we infer that for every $\rho<\rho_i$
$$
\frac{|\omega_i \cap B_\rho(x)|}{\rho^d}=\lim_n \frac{|\omega_i \cap B_\rho(x_n)|}{\rho^d}\ge c_i,
$$
so that $x$ cannot have zero density with respect to $\omega_i$.

\item[(c)] Clearly $J_{u_i}\subseteq C_i$ since jump points have positive density for $\omega_i$. Moroever,
if $x\in C_i\cap C_j$, we have that $x$ has positive density with respect to both $\omega_i$ and $\omega_j$ so that $x\in \partial^e\omega_i \cap \partial^e\omega_j \subseteq J_{u_i}\cap J_{u_j}$. We thus obtain equality \eqref{eq:equalityCi}. 
\end{itemize}
\par
\par
\end{proof}

\begin{lemma}[\bf Almost-quasi minimality]
\label{lem:AQM}
Let $u= (u _1, \dots, u_k)\in \mathcal F (D) $ be a solution to problem $(\overline P)$. Then for every $i = 1, \dots, k$,  the function $u_i$ is an almost quasi minimizer for the Mumford-Shah functional (see Section \ref{sec:mon}) in the open set $D\cap \bigcap_{j\not=i} \omega_j^{(0)}$.
\end{lemma}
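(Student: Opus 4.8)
The plan is to fix an index $i$ and a ball $B_\rho(x)\subset D\cap\bigcap_{j\ne i}\omega_j^{(0)}$, take a competitor $v\in SBV(\R^d)$ with $\{v\ne u_i\}\subseteq B_\rho(x)$, and build from it an admissible $k$-tuple for $(\overline P)$ by leaving the other phases untouched. The first point to check is admissibility: since $B_\rho(x)$ is contained in $\omega_j^{(0)}$ for every $j\ne i$, the function $u_j$ vanishes a.e.\ on $B_\rho(x)$, so replacing $u_i$ by $v$ on $B_\rho(x)$ keeps the disjointness constraint $v\cdot u_j=0$ in $D$ (modulo a truncation to keep $v$ nonnegative and bounded, and extending by $0$ outside $\overline D$, which we may assume by replacing $v$ with $(v^+\wedge M_i)$ — this only decreases the Dirichlet and jump energies). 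We also need $v$ to be supported in $\overline D$, which holds because $B_\rho(x)\subset D$.

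Next I would write the optimality inequality of $u$ against this competitor. Normalizing $\int_{\R^d}u_i^2\,dx=1$, minimality gives, after multiplying through by the (positive) denominator $\int_{\R^d}v^2\,dx = 1 - \int_{B_\rho}u_i^2 + \int_{B_\rho}v^2$,
\[
\int_{\R^d}|\nabla u_i|^2 + \beta\int_{J_{u_i}}[(u_i^+)^2+(u_i^-)^2]\,d\Hm \le \Big(\int_{\R^d}v^2\Big)\Big[\int_{\R^d}|\nabla v|^2 + \beta\int_{J_v}[(v^+)^2+(v^-)^2]\,d\Hm\Big].
\]
Cancelling the contributions outside $B_\rho$ on both sides (they are the same since $u_i=v$ there) and using $\int_{\R^d}v^2 \le 1 + C'\rho^d$ (from the $L^\infty$ bound on $v$ and the trivial bound $\int_{B_\rho}v^2\le M_i^2\omega_d\rho^d$) together with the uniform bound $R_i := \int_{\R^d}|\nabla u_i|^2 + \beta\int_{J_{u_i}}[(u_i^+)^2+(u_i^-)^2]\,d\Hm<+\infty$, one obtains
\[
\int_{B_\rho}|\nabla u_i|^2 + \beta\int_{\overline B_\rho\cap J_{u_i}}[(u_i^+)^2+(u_i^-)^2]\,d\Hm \le \int_{B_\rho}|\nabla v|^2 + \beta\int_{\overline B_\rho\cap J_v}[(v^+)^2+(v^-)^2]\,d\Hm + c_\alpha\rho^d.
\]
Since $\alpha_i \le u_i, v \le M_i$ on their supports inside $B_\rho$ (up to the truncation), the jump integrals are comparable to $\Hm$ of the jump sets with constants $\Lambda_1 := \beta\alpha_i^2$ on the left and $\Lambda_2 := 2\beta M_i^2$ on the right (using $(u_i^+)^2+(u_i^-)^2\ge \alpha_i^2$ on $J_{u_i}$ — here one must be a bit careful at jump points where one trace is $0$, but $J_{u_i}\subset C_i\subseteq \overline{\omega_i}$ and the lower bound in \eqref{f:Bounds} guarantees the nonzero trace is $\ge\alpha_i$). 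This yields exactly the almost-quasi minimality inequality with exponent $\alpha=1$ and remainder $c_\alpha\rho^{d-1+\alpha}=c_\alpha\rho^d$, since $d-1+1=d$.

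The main obstacle I expect is the bookkeeping at the jump points lying on $\partial^e\omega_i$, where $u_i$ has a one-sided trace (one of $u_i^\pm$ being $0$): one needs $\Lambda_1\Hm(J_{u_i}\cap\overline B_\rho)\le \beta\int_{J_{u_i}\cap\overline B_\rho}[(u_i^+)^2+(u_i^-)^2]$, which requires that on $J_{u_i}$ at least one trace is bounded below by $\alpha_i$; this is where Corollary \ref{cor:phases}(a) (so that jump points have positive $\omega_i$-density) combined with the lower bound in Proposition \ref{p:bounds} is used. A second, minor point is ensuring the competitor $v$ can be taken nonnegative, bounded by $M_i$, and with $v^2\in SBV$, i.e.\ genuinely in the class used to define $\lambda_1$; truncating $v$ to $v^+\wedge M_i$ handles this without increasing either energy term, and one checks that $\{v^+\wedge M_i \ne u_i\}$ is still contained in $B_\rho(x)$ because $u_i$ itself satisfies $0\le u_i\le M_i$ there.
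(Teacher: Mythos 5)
Your strategy mirrors the paper's: replace $u_i$ by a truncated competitor $[|v|\wedge M_i]$ on $B_\rho(x)$, leave the other phases untouched (admissibility follows exactly as you say, since each $u_j$ with $j\neq i$ vanishes a.e.\ on $B_\rho(x)\subset\omega_j^{(0)}$), invoke optimality of $u$, and compare the Mumford--Shah-type energies with $\Lambda_1=\beta\alpha_i^2$, $\Lambda_2=2\beta M_i^2$ and error exponent $\alpha=1$. The point about one-sided traces on $J_{u_i}$ being bounded below by $\alpha_i$ is correctly identified.

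There is, however, one genuine gap. After clearing denominators correctly, the optimality inequality reads
$$
\int_{B_\rho}|\nabla u_i|^2\,dx + \beta\int_{J_{u_i}\cap\overline B_\rho}\big[(u_i^+)^2+(u_i^-)^2\big]\,d\mathcal H^{d-1}
\le \int_{B_\rho}|\nabla v|^2\,dx + \beta\int_{J_v\cap\overline B_\rho}\big[(v^+)^2+(v^-)^2\big]\,d\mathcal H^{d-1} + 2\d\cdot N(v),
$$
where $\d=\int_{B_\rho}(u_i^2-v^2)\,dx$ satisfies $|\d|\le 2M_i^2\omega_d\rho^d$ and $N(v)$ denotes the \emph{total} energy of the modified $k$-th phase $\tilde v_i$, not of $u_i$. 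Your invocation of the uniform bound $R_i$ on $u_i$'s energy does not, by itself, control $N(v)$: a competitor with wildly oscillating jump set has arbitrarily large $N(v)$, so the multiplicative error $2\d\cdot N(v)$ cannot be absorbed into $c\rho^d$ without an extra observation. The paper handles this by first reducing, with no loss of generality, to competitors for which the right-hand side of the almost-quasi minimality inequality does not exceed the left-hand side (equation (\ref{f:maj}) in the paper's proof); otherwise the inequality holds trivially. Under this reduction, $N(v)\le C'$ with $C'$ depending only on $u$, and the error becomes $c\rho^d$. You need this WLOG reduction, or an equivalent one, to close the argument. As a minor secondary remark, the inequality you display with the factor $\int_{\R^d}v^2$ on the right-hand side is written in the wrong direction (the normalized minimality gives a \emph{division} by $\int v^2$, not a multiplication), though this looks like a slip rather than a conceptual error and does not affect the intended reasoning once corrected.
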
 

\begin{proof} 
Let us set for brevity
$$
\widehat \omega_i:=\bigcap_{j\not=i} \omega_j^{(0)}.
$$
Let us show that there exists a constant $c>0$ such that, for any $y \in \widehat \omega _i \cap D$ and any $v_i \in \mathcal SBV (\widehat \omega _i\cap D) $ with $ \{ v _i \neq u _i\} \subseteq B _\rho (y) \subset \widehat \omega _i \cap D$, there holds
\begin{multline}
\label{f:cla}
 \int _{B _\rho (y)} |\nabla u _i |^ 2 \, dx + \beta \alpha _i ^ 2 \mathcal H ^ {d-1} \big ( J _{u _i} \cap \overline B _\rho (y) \big )\\
 \le
 \int _{B _\rho (y)}  |\nabla v _i |^ 2 \, dx + 2 \beta \| u _i \| _\infty ^ 2 \mathcal H ^ {d-1} \big ( J _{v _i} \cap \overline B _\rho (y)\big ) + c \rho ^ d,
\end{multline}
where $\alpha_i$ is given in \eqref{f:Bounds}.
 To that aim we can assume without loss of generality that 
\begin{multline}
\label{f:maj}
\int _{B _\rho (y)}  |\nabla v _i |^ 2 \, dx + 2 \beta \| u _i \| _\infty ^ 2 \mathcal H ^ {d-1} \big ( J _{v _i} \cap \overline B _\rho (y) \big )  \\
\le  \int _{B _\rho (y)} |\nabla u _i |^ 2 \, dx + \beta \alpha _i ^ 2 \mathcal H ^ {d-1} \big ( J _{u _i} \cap \overline B _\rho (y) \big ). 
\end{multline}
We consider the function $\tilde v = ( \tilde v _1, \dots, \tilde v _k)$ defined on $D$ by 
  $$\tilde v _j:= 
  \begin{cases}
  u _j & \text { if } j \neq i
  \\  \noalign{\smallskip}
u_i 1_{\R^d\setminus B_\rho(y)}+[|v_i| \wedge \|u_i\|_\infty] \1_{B_\rho(y)}
  & \text{ if } j = i\,.
  \end{cases}$$
Since  $ \{ v _i \neq u _i\}\subseteq B_\rho(y) \subset \widehat \omega _i \cap D$, 
the function $\tilde v$ defines an element of $\mathcal F (D)$. Let us take it as a competitor in problem $(\overline P)$. 
  Exploiting the optimality of $u$ for problem $(\overline P)$, the fact that $\tilde v _j = u _j$ for every $j \neq i$, and and recalling that  $ \{ v _i \neq u _i\} \subset B _\rho (y)$, we obtain (assuming without restriction that $\int_{\R^d}u_i^2\,dx=1$)
\begin{multline*}
 \int_{\R ^d} |\nabla u_i | ^ 2  \, dx+ \beta \int_{J _{u _i}} \big [(u_i ^+) ^ 2 + (u_i ^-) ^ 2  \big ] \, d \mathcal H ^ { d-1} \\
 \le
 \frac{  \int_{\R ^d} |\nabla \tilde v _i | ^ 2  \, dx+ \beta \int_{J _{\tilde v _i}} \big [(\tilde v_i ^+) ^ 2 + (\tilde v_i ^-) ^ 2  \big ] \, d \mathcal H ^ { d-1} }{1- \int _{B _\rho (y)}( u _i ^ 2- \tilde v _i ^ 2) \, dx}\,.
\end{multline*}
Then, by using the estimate
$$ \Big | \int _{B _\rho (y)}( u _i ^ 2- \tilde v _i ^ 2) \, dx \Big | \leq 2 \| u _i \| ^2_\infty \omega _d \rho ^ d \, , $$ 
  and the  inequality $\frac{1}{1- \d} \leq 1 + 2 \d$ holding for $\d \in [0, \frac{1}{2}]$, we obtain, for $\rho$ sufficiently small, 
\begin{multline}
\label{f:sti1}
\int_{\R ^d} |\nabla u_i | ^ 2  \, dx+ \beta \int_{J _{u _i}} \big [(u_i ^+) ^ 2 + (u_i ^-) ^ 2  \big ] \, d \mathcal H ^ { d-1}  \\
\le
\left[  \int_{\R ^d} |\nabla \tilde v _i | ^ 2  \, dx+ \beta \int_{J _{\tilde v _i}} \big [(\tilde v_i ^+) ^ 2 + (\tilde v_i ^-) ^ 2  \big ] \, d \mathcal H ^ { d-1} \right] \cdot \Big [ 1 + 2 \| u _i \| ^2_\infty \omega _d \rho ^ d \Big ].
\end{multline}
  In view of \eqref{f:maj} and of the definition of $\tilde v _i$, we see that
  \begin{equation}\label{f:sti2} \int_{\R ^d} |\nabla \tilde v _i | ^ 2  \, dx+ \beta \int_{J _{\tilde v _i}} \big [(\tilde v_i ^+) ^ 2 + (\tilde v_i ^-) ^ 2  \big ] \, d \mathcal H ^ { d-1} \leq C' \, ,\end{equation}
  for some positive constant $C'$ depending on $u$. 
  
  From \eqref{f:sti1} and \eqref{f:sti2}
   we deduce that, provided $\rho$ is sufficiently small, the required inequality \eqref{f:cla} is satisfied for some positive constant $c$ (depending on $u$).  Since the left hand side of such inequality is bounded in $\rho$, up to increasing $c$ we infer that it continues to hold for every $\rho >0$ such that $B _\rho (y) \subseteq \widehat \omega _i \cap D$. The proof is thus concluded.
\end{proof}
 
\medskip

\begin{proposition}[\bf Essential closedness of the jump sets]
\label{prop:ess-closed}
  If $u= (u _1, \dots, u_k)\in \mathcal F (D) $ is a solution to problem $(\overline P)$, for every $i = 1, \dots, k$ the set $J _{u_i}$ is essentially closed in $D$, {\it i.e.},
\begin{equation}
\label{eq:Juclosed}
\mathcal H^{d-1}\left( \left(\overline{J_{u_i}} \setminus J_{u_i}\right)\cap D\right)=0.
\end{equation}
\end{proposition}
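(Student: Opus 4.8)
The plan is to reduce \eqref{eq:Juclosed} to two facts already at our disposal: the almost-quasi minimality of $u_i$ in the open set $\widehat\omega_i\cap D:=D\cap\bigcap_{j\neq i}\omega_j^{(0)}$ (Lemma \ref{lem:AQM}), which controls the closure of $J_{u_i}$ there via \eqref{eq:ess-closed-aqm}, and the structural description of the phases in Corollary \ref{cor:phases}, which shows that the part of $D$ lying \emph{outside} $\widehat\omega_i$ cannot contribute to $\overline{J_{u_i}}\setminus J_{u_i}$ at all.

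I would start from the observation that $\overline{J_{u_i}}\setminus J_{u_i}\subseteq C_i$: by Corollary \ref{cor:phases}(c) the set $C_i=\R^d\setminus\omega_i^{(0)}$ is closed and contains $J_{u_i}$, hence it contains $\overline{J_{u_i}}$. Next, since $\omega_j^{(0)}$ is open for each $j$ (Corollary \ref{cor:phases}(b)), the set $\widehat\omega_i\cap D$ is open, and its complement in $D$ is $D\cap\bigcup_{j\neq i}C_j$, because $\R^d\setminus\widehat\omega_i=\bigcup_{j\neq i}C_j$. Therefore
\[
\bigl(\overline{J_{u_i}}\setminus J_{u_i}\bigr)\cap D=\Bigl[\bigl(\overline{J_{u_i}}\setminus J_{u_i}\bigr)\cap\widehat\omega_i\cap D\Bigr]\ \cup\ \bigcup_{j\neq i}\Bigl[\bigl(\overline{J_{u_i}}\setminus J_{u_i}\bigr)\cap C_j\cap D\Bigr].
\]
For every $j\neq i$ the corresponding set on the right is empty: from $\overline{J_{u_i}}\setminus J_{u_i}\subseteq C_i$ and \eqref{eq:equalityCi} one gets $(\overline{J_{u_i}}\setminus J_{u_i})\cap C_j\subseteq C_i\cap C_j=J_{u_i}\cap J_{u_j}\subseteq J_{u_i}$, which is disjoint from $\overline{J_{u_i}}\setminus J_{u_i}$.

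It remains to treat the first term. By Lemma \ref{lem:AQM}, $u_i$ is an almost-quasi minimizer for the Mumford--Shah functional in the open set $\Omega_i:=\widehat\omega_i\cap D$; applying \eqref{eq:ess-closed-aqm} to $u_i$ restricted to $\Omega_i$, and using that $\Omega_i$ is open (so that a point of $\Omega_i$ lies in $\overline{J_{u_i}}$ precisely when it is a limit of jump points of $u_i$ contained in $\Omega_i$), we obtain $\mathcal H^{d-1}\bigl((\overline{J_{u_i}}\setminus J_{u_i})\cap\Omega_i\bigr)=0$. Combining this with the previous paragraph yields $\mathcal H^{d-1}\bigl((\overline{J_{u_i}}\setminus J_{u_i})\cap D\bigr)=0$, which is \eqref{eq:Juclosed}.

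The proof is short only because the two preliminary results do the heavy lifting: the real content is in Lemma \ref{lem:AQM} (the almost-quasi minimality ``well inside'' the support, built on the bounds of Proposition \ref{p:bounds}) and in the density estimate of Lemma \ref{p:density} together with Corollary \ref{cor:phases}. The one point requiring attention here is recognizing that a limit point of $J_{u_i}$ which is \emph{not} itself a jump point of $u_i$ can only sit in $\widehat\omega_i\cap D$; this is exactly where the identity $C_i\cap C_j=J_{u_i}\cap J_{u_j}$ of \eqref{eq:equalityCi} is used, as it prevents the closure from creating new points on the interfaces between distinct phases.
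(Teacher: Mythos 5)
Your argument is correct and follows essentially the same route as the paper: split $D$ along $\widehat\omega_i=\bigcap_{j\neq i}\omega_j^{(0)}$, use the almost-quasi minimality (Lemma~\ref{lem:AQM} plus \eqref{eq:ess-closed-aqm}) on $\widehat\omega_i\cap D$, and use $\overline{J_{u_i}}\subseteq C_i$ together with $C_i\cap C_j=J_{u_i}\cap J_{u_j}$ from \eqref{eq:equalityCi} to show the complementary part is empty. One cosmetic caution: you locally reuse the symbol $\Omega_i$ for $\widehat\omega_i\cap D$, which clashes with the optimal sets introduced later in Proposition~\ref{p:supports}; a different letter would avoid confusion.
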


\begin{proof}
The relation
\begin{equation}
\label{eq:hd1}
\mathcal H^{d-1}\left( (\overline{J_{u_i}}\setminus J_{u_i})\cap D \cap \bigcap_{j\not=i} \omega _j^{(0)}\right)=0
\end{equation}
is a consequence of the almost-quasi minimality of Lemma \ref{lem:AQM} (see Section \ref{sec:mon}).
On the other hand, recalling point (c) of Corollary \ref{cor:phases}
$$
\overline{J_{u_i}}\setminus \bigcap_{j\not=i} \omega _j^{(0)}=\overline{J_{u_i}}\cap \bigcup_{j\not=i}C_j\subseteq C_i\cap \bigcup_{j\not=i}C_j\subseteq J_{u_i}
$$
which entails
\begin{equation}
\label{eq:hd2}
(\overline{J_{u_i}}\setminus J_{u_i})\setminus \bigcap_{j\not=i} \omega _j^{(0)}=\emptyset.
\end{equation}
The conclusion follows gathering \eqref{eq:hd1} and \eqref{eq:hd2}.
\end{proof}

\begin{proposition}[\bf Ahlfors regularity]
\label{p:Ahlf} 
  If $u= (u _1, \dots, u_k)\in \mathcal F (D) $ is a solution to problem $(\overline P)$, for every $i = 1, \dots, k$
the set $J_{u_i}$ is Ahlfors regularity in $D$, that is 
  there exist a constant $k _i>0$ and a radius $ r _i >0$ such that,  for  every $x\in J _{u_i}$ and every  $\rho \in (0, r_i]$ such that $B _\rho (x) \subset D$, there holds  
         \begin{equation}\label{f:Ahlf}
   k_i \rho ^ {d-1} \leq \mathcal H ^ {d-1} (J _{u _i} \cap B _\rho (x)) \leq \frac{1}{k_i} \rho ^ {d-1}\,. 
     \end{equation} 
\end{proposition}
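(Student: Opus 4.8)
\emph{Upper bound.} The inequality \eqref{f:Ahlf} splits into an upper and a lower density estimate, which I would prove by quite different means. The upper one is essentially already contained in the proof of Lemma~\ref{p:density}: estimate \eqref{f:upperb}, obtained there with the admissible competitor $v_j=u_j$ ($j\ne i$), $v_i=u_i\1_{\R^d\setminus B_\rho(x)}$, holds for every $x\in\R^d$ and $\rho<\rho_0$ (with $B_\rho=B_\rho(x)$). I would bound its right-hand side from above using $u_i\le M_i$ a.e.\ (Proposition~\ref{p:bounds}), which also forces $u_{i,\partial B_\rho}^+\le M_i$ on $\partial B_\rho(x)$, getting $C M_i^2\omega_d\rho^d+3\beta d M_i^2\omega_d\rho^{d-1}$; and its left-hand side from below by $\beta\alpha_i^2\,\mathcal H^{d-1}(J_{u_i}\cap B_\rho(x))$, using that $u_i$ takes a.e.\ only the value $0$ and values in $[\alpha_i,M_i]$ (Proposition~\ref{p:bounds} again), so that at $\mathcal H^{d-1}$-a.e.\ point of $J_{u_i}$ the larger trace satisfies $(u_i^+)^2\ge\alpha_i^2$. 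This gives $\mathcal H^{d-1}(J_{u_i}\cap B_\rho(x))\le C'\rho^{d-1}$ for $\rho$ small, hence the right-hand inequality in \eqref{f:Ahlf} (in fact for every $x$, not only $x\in J_{u_i}$).

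\emph{Lower bound.} Here I would argue by a dichotomy on the density of the phase $\omega_i$ in the ball. Fix $x\in J_{u_i}$ and $\rho$ with $B_\rho(x)\subset D$. Since $x\in J_{u_i}\subseteq C_i=\R^d\setminus\omega_i^{(0)}$ by Corollary~\ref{cor:phases}(c), Lemma~\ref{p:density} gives $|\omega_i\cap B_\rho(x)|\ge c_i\rho^d$ for $\rho<\rho_i$. Fix a threshold $\theta\in(0,1)$ with $\theta\le\min_{j\ne i}c_j/(2^d\omega_d)$. If $|\omega_i\cap B_\rho(x)|\le(1-\theta)\omega_d\rho^d$, then both $|\omega_i\cap B_\rho(x)|$ and $|B_\rho(x)\setminus\omega_i|$ are $\ge c\rho^d$; since $\omega_i$ has finite perimeter (because $\partial^e\omega_i\subseteq J_{u_i}$ with $\mathcal H^{d-1}(J_{u_i})<\infty$, by Federer's criterion), the relative isoperimetric inequality in the ball yields $\mathcal H^{d-1}(\partial^e\omega_i\cap B_\rho(x))\ge c'\rho^{d-1}$, and $\partial^e\omega_i\subseteq J_{u_i}$ by Corollary~\ref{cor:phases}(a). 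If instead $|\omega_i\cap B_\rho(x)|>(1-\theta)\omega_d\rho^d$, then $|B_\rho(x)\setminus\omega_i|<\theta\omega_d\rho^d$; I would use Lemma~\ref{p:density} for the \emph{other} phases to show that $B_{\rho/2}(x)\subseteq\widehat\omega_i\cap D$, because any point $y\in B_{\rho/2}(x)$ lying outside some $\omega_j^{(0)}$ would --- via Lemma~\ref{p:density} applied with the index $j$ at $y$, together with $\omega_j\subseteq\R^d\setminus\omega_i$ --- force $|B_\rho(x)\setminus\omega_i|\ge c_j2^{-d}\rho^d$, against the choice of $\theta$. On $B_{\rho/2}(x)\subseteq\widehat\omega_i\cap D$ the function $u_i$ is an almost-quasi minimizer for the Mumford--Shah functional (Lemma~\ref{lem:AQM}), so the standard density lower bound for jump sets of almost-quasi minimizers (the decay step behind \eqref{eq:ess-closed-aqm}, see \cite{DGCL,BL14}), combined with the essential closedness of $J_{u_i}$ from Proposition~\ref{prop:ess-closed}, gives $\mathcal H^{d-1}(J_{u_i}\cap B_{\rho/4}(x))\ge c''\rho^{d-1}$ for $\rho$ small. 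Taking $r_i$ below all the radii and thresholds used above, the two cases together yield the left-hand inequality in \eqref{f:Ahlf}.

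The step I expect to be the main obstacle is the lower bound at the \emph{triple points} of $J_{u_i}$, i.e.\ the points of $J_{u_i}$ that lie on some other $J_{u_j}$: there $u_i$ is not a Mumford--Shah almost-minimizer near $x$, because such points do not belong to $\widehat\omega_i$, so Lemma~\ref{lem:AQM} does not apply. The dichotomy is tailored precisely to skirt this: at a triple point the phase $\omega_j$ has positive density arbitrarily close by (Corollary~\ref{cor:phases}(c)), hence $\omega_i$ cannot occupy almost all of $B_\rho(x)$ and one necessarily falls in the first case, where the relative isoperimetric inequality applied to the \emph{phase} $\omega_i$ itself, powered by the uniform density bound of Lemma~\ref{p:density}, already exhibits enough jump set. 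The delicate part is to calibrate $\theta$ against the density constants $c_j$, and to check that the finitely many radii involved ($\rho_i$, the $\rho_j$'s, the smallness radius in the Mumford--Shah decay estimate, and the threshold $\rho_0$ for \eqref{f:upperb}) can all be absorbed into a single $r_i$ valid for every $x$.
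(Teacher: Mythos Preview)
Your upper bound argument is exactly the one the paper gives: it recycles estimate \eqref{f:upperb} from the proof of Lemma~\ref{p:density} and bounds both sides via the $\alpha_i$, $M_i$ of Proposition~\ref{p:bounds}.

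For the lower bound, your proof is correct but organized along a genuinely different dichotomy than the paper's. The paper splits on the \emph{location of $x$}: either $x\notin\widehat\omega_i$, in which case $x\in C_i\cap C_j$ for some $j$ and the relative isoperimetric inequality together with Lemma~\ref{p:density} at $x$ gives the bound; or $x\in\widehat\omega_i$, in which case the almost-quasi minimizer density bound from \cite{BL14} applies, but only for radii $\rho<\rho''_i\wedge\delta_i(x)$ with $\delta_i(x):=\mathrm{dist}(x,\partial\widehat\omega_i)$. The paper then needs an extra two-step bridging argument to cover the range $\rho\ge\delta_i(x)$: first a scaling trick (apply the bound at radius $\rho/m$) to reach $\rho<2\delta_i(x)$, and then, for $\rho\ge2\delta_i(x)$, pick a nearest point $x_j\in C_j$ and reuse the isoperimetric argument. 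Your split is instead on the \emph{volume fraction of $\omega_i$ at scale $\rho$}: when $\omega_i$ does not nearly fill $B_\rho(x)$, the relative isoperimetric inequality gives enough essential boundary; when it does, you force $B_{\rho/2}(x)\subset\widehat\omega_i$ via Lemma~\ref{p:density} for the other phases and invoke the almost-quasi minimizer density bound directly. This scale-by-scale dichotomy absorbs the paper's bridging step automatically and is, in that respect, a bit more streamlined. Two minor remarks: the inequality $\theta\le\min_{j\ne i}c_j/(2^d\omega_d)$ should really be strict to get a contradiction, and Proposition~\ref{prop:ess-closed} is not actually needed in your second case (the Ahlfors lower bound for almost-quasi minimizers from \cite{BL14} is the input, not the output).
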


 \proof In order to prove the upper bound inequality in \eqref{f:Ahlf},   we proceed as in the proof of Lemma \ref{p:density} until we arrive at inequality \eqref{f:upperb}. Using such inequality and  Proposition \ref{p:bounds}, we obtain 
$$
\begin{array}{ll} 
 \beta  \alpha _i ^ 2 \mathcal H ^ {d-1}  (J _{u _i} \cap B _\rho (x))  &\displaystyle  \leq 
\int_{B _\rho(x)} |\nabla u_i | ^ 2  \, dx+ \beta \int_{B _\rho(x) \cap J _{u _i}} \big [(u_i ^+) ^ 2 + (u_i ^-) ^ 2  \big ] \, d \mathcal H ^ { d-1} \\
 \noalign{\medskip} &\displaystyle \leq 
 C \int _{B _\rho} u _i ^ 2 + 3 \beta   \int_{\partial B _\rho(x) }(u_{i, \partial B _\rho} ^+) ^ 2  \, d \mathcal H ^ { d-1}  
 \\
 \noalign{\medskip} &\displaystyle \leq 
 C M _i ^ 2 \omega _d \rho ^ d  + 3 \beta M _ i ^ 2 d\omega _{d} \rho ^ {d-1}\,,
  \end{array}
  $$
which clearly implies the validity of the upper bound inequality in \eqref{f:Ahlf} for $\rho$ sufficiently small. 
\par

Concerning the lower bound, let us employ the following notation:
$$
\widehat \omega_i:=\bigcap_{j\not=i} \omega_j^{(0)},\qquad \delta_i(x):=d(x,\partial \widehat \omega_i)\,. \EEE 
$$
Let us fix $x\in D$, and let us distinguish the two cases 
$$
x\in J_{u_i}\setminus \widehat \omega_i\qquad\text{and}\qquad x\in J_{u_i}\cap \widehat \omega_i.
$$
\vskip10pt\noindent{\bf Case 1.} Let $x\in J_{u_i}\setminus \widehat \omega_i$. By Corollary \ref{cor:phases}, we have $x\in C_i\cap C_j$ for some $j\not=i$. In view of the relative isoperimetric inequality
  \begin{equation}\label{f:rel-iso}
  \Big (\min \Big \{ \big  |B _\rho (x) \cap \omega _i \big |\, ,  \, \big |B _\rho (x) \setminus \omega _i \big | \Big \}  \Big ) ^ {\frac{d-1}{d}} \leq c _d \mathcal H ^ {d-1} \big (\partial ^ e \omega _i \cap B _\rho (x) \big ) \,,
  \end{equation} 
together with the inequality $|B _\rho (x) \setminus \omega _i \big | \supseteq  |B _\rho (x) \cap \omega _j\big |$ and the density lower bound \eqref{f:lowerd} for $\omega_i$ and $\omega_j$ at the point $x$, we infer
that there exist $\rho_i'>0$ and $k_i'>0$ (independent of $x$) such that for every $\rho<\rho_i'$
$$
\mathcal H^{d-1}(J_{u_i}\cap B_\rho(x))\ge \mathcal H^{d-1}(\partial^e\omega_i\cap B_\rho(x))\ge k'_i \rho^{d-1}.
$$

\vskip10pt\noindent{\bf Case 2.} Let $x\in J_{u_i}\cap \widehat \omega_i$. Recall that by Lemma \ref{lem:AQM} the function $u_i$ is an almost-quasi minimizer for the Mumford-Shah functional in the open set $D\cap  \widehat \omega_i$, which entails the Ahlfors regularity of its jump set (see \cite[Section 3.2]{BL14}). 
As a consequence, there exist a radius $\rho_i''>0$ and a constant $k_i''>0$ (independent of $x$) such that for every $\rho<\rho''_i\wedge \delta_i(x)$ 
\begin{equation}
\label{eq:ahl-ju-1}
\mathcal H^{d-1}(J_{u_i}\cap B_\rho(x))\ge k''_i \rho^{d-1}.
\end{equation}
Let us extend this lower bound on balls $B_\rho(x)$ contained in $D$ (and not only in $D\cap \widehat \omega_i$).
\par
To that aim we first remark that, up to changing $k''_i$ into $k''_i/m ^ {d-1}$, 
   the validity
   of \eqref{eq:ahl-ju-1} can be extended to radii $\rho \in (0, m(\rho''_i \wedge \delta _i (x)))$ for any $m \in \N$. Indeed by applying \eqref{eq:ahl-ju-1} with $\frac{\rho}{m}$ in place of $\rho$ we get:
   \begin{equation}\label{f:sit2}
    \mathcal H ^ {d-1} (J _{u_i} \cap B _\rho (x)) \geq     \mathcal H ^ {d-1} (J _{u_i} \cap B _{\frac{\rho}{m}} (x)) \geq \frac{k_i '}{m ^ {d-1}} \rho ^{d-1}
\end{equation}
for every $\rho \in (0, m(\rho''_i \wedge \delta _i (x)))$. Since we can choose $m$ large enough (independent of $x$) so that 
\begin{equation}\label{f:stimam}
m(\rho''_i \wedge \delta _i (x))  \geq 2 \d_ i (x)\,, 
\end{equation} 
we are reduced to show the lower bound inequality in \eqref{f:Ahlf} for the radii $\rho \geq 2 \d_i (x)$ such that $B _\rho (x) \subset D$.  For such a radius, we can proceed in a similar way as done for points $x \in J _{u _i} \setminus \widehat \omega _i$, namely we prove the inequality with $J _{u_i}$ replaced by $\partial ^ e \omega _i$. 
\par
To such purpose, let $x _j \in C _j$ be a point such that $|x_j - x |= \delta_i (x)$. Then we may write thanks to Lemma \ref{p:density}
$$
|B _\rho (x) \setminus \omega _i|
  \geq |\omega _j \cap B _\rho (x)  | \geq |  \omega _j \cap B _{\rho - \delta_i(x)} ( x_j) | \geq c _j (\rho - \delta_i(x)) ^ d  \geq \frac{c_j} {2^d} \rho^d. \,    
$$ 
Thanks to the isoperimetric inequality \eqref{f:rel-iso}, we infer that, for any $\rho \geq 2 \d_i (x)$, $\mathcal H ^ {d-1} \big (\partial ^ e \omega _i \cap B _\rho (x) \big ) $, and hence $\mathcal H ^ {d-1} \big (J_{u _i} \cap B _\rho (x) \big ) $ is bounded from below by a constant (independent of $x$) times $\rho ^ {d-1}$. Combining this assertion with \eqref{f:sit2}-\eqref{f:stimam}, we have achieved the proof of the lower bound inequality in \eqref{f:Ahlf} also for points $x \in J _{u _i} \cap \widehat \omega _i$.

\qed

\subsection{Identification of an optimal $k$-tuple in $\mathcal A (D)$ and conclusion.} 
  
\begin{proposition}
\label{p:supports}  
If $u= (u _1, \dots, u_k)\in \mathcal F (D) $ is a solution to problem $(\overline P)$, there exists a $k$-tuple of open connected sets $(\Omega _1, \dots, \Omega _k) \in \mathcal A ( D)$ such that, for every $i = 1, \dots, k$, 
$$
\partial \Omega _i= \overline { J _{u_i}}, 
\qquad 
\mathcal H ^ {d-1}\left( (\partial \Omega _i \setminus J _{u _i} )\cap D\right)= 0, 
\qquad u _i = 0 \text{ a.e. on } \R ^ d \setminus \Omega _i \,.
$$
Moreover, for every $i = 1, \dots, k$, the function $u _i$ belongs to $H ^ 1 (\Omega _i ) \cap L ^ \infty (\Omega _i)$ and satisfies $u _i \geq \alpha _i >0$ a.e.\ on $\Omega _i$. 
\end{proposition}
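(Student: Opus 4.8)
The plan is to read off $\Omega_i$ from the geometry of the essentially closed jump set $\overline{J_{u_i}}$, and then to check the listed properties one at a time, using Propositions \ref{p:bounds} and \ref{prop:ess-closed} and Corollary \ref{cor:phases}. First I would record what the earlier steps give: fixing $i$, we have $\alpha_i\le u_i\le M_i$ a.e.\ on $\omega_i=\mathrm{supp}(u_i)$, $u_i\in SBV(\R^d)\cap L^\infty(\R^d)$ with $\mathcal H^{d-1}(J_{u_i})<+\infty$, and $\mathcal H^{d-1}((\overline{J_{u_i}}\setminus J_{u_i})\cap D)=0$. Since $\omega_i\subseteq\overline D$ forces $\overline{J_{u_i}}\subseteq\overline D$, and $\partial D$ is Lipschitz (so $\mathcal H^{d-1}(\partial D)<+\infty$), this yields $\mathcal H^{d-1}(\overline{J_{u_i}})\le\mathcal H^{d-1}(J_{u_i})+\mathcal H^{d-1}(\partial D)<+\infty$; in particular $\overline{J_{u_i}}$ is $\mathcal H^{d-1}$-rectifiable of finite measure, hence Lebesgue negligible.

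The next ingredient I would prove is a dichotomy on the connected components of the open set $\R^d\setminus\overline{J_{u_i}}$: on any such component $V$ the function $u_i$ has no jump, so (together with $\nabla u_i\in L^2$ and $u_i\in L^\infty$) $u_i\in H^1(V)$, and the truncation $\min\{\max\{u_i,\alpha_i/3\},2\alpha_i/3\}$ has a.e.\ vanishing gradient on $V$ because $|\{0<u_i<\alpha_i\}|=0$ (the lower bound of Proposition \ref{p:bounds}, see \eqref{f:eta}); being constant on the connected set $V$, it forces either $u_i=0$ a.e.\ on $V$ or $u_i\ge\alpha_i$ a.e.\ on $V$. Let $\mathcal V_i$ be the (nonempty) family of components of the second type. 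Then I would reduce to the case in which $\mathcal V_i$ is a singleton: for $V\in\mathcal V_i$ the vector $(u_1,\dots,u_i\1_V,\dots,u_k)$ still belongs to $\mathcal F(D)$ (here $u_i\1_V\le u_i$ is nonnegative and supported in $\overline D$, and $V$ has finite perimeter since $\partial^e V\subseteq\overline{J_{u_i}}$), and the $i$-th Rayleigh quotient does not increase under this replacement, since the Dirichlet part of the energy splits exactly over the components (as $u_i=0$ a.e.\ outside $\bigcup_{\mathcal V_i}V$), while the jump part does not increase because along $\partial^e V$ the trace of $u_i$ from $V$ is one of the two traces already accounted for in $J_{u_i}$. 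By minimality of $u$ this forces each such $(u_1,\dots,u_i\1_V,\dots,u_k)$ to be again a solution of $(\overline P)$; replacing $u$ by one of these and relabelling, we may assume that $u_i$ vanishes a.e.\ outside one connected component of $\R^d\setminus\overline{J_{u_i}}$.

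With this reduction I would define $\Omega_i$ to be the connected component of $\R^d\setminus\overline{J_{u_i}}$ on which $u_i$ does not vanish: it is open and connected, by the dichotomy $u_i\ge\alpha_i$ a.e.\ on $\Omega_i$ and $u_i=0$ a.e.\ on $\R^d\setminus\Omega_i$, so $\{u_i>0\}=\Omega_i$ up to a negligible set, and $u_i\in H^1(\Omega_i)\cap L^\infty(\Omega_i)$ because $u_i$ has no jump on the open set $\Omega_i$. Then I would verify the remaining claims: $\Omega_i\subseteq\overline D$ (as $u_i\ge\alpha_i$ a.e.\ on $\Omega_i$ and $\mathrm{supp}(u_i)\subseteq\overline D$), hence $\Omega_i\subseteq D$ since $\partial D$ is Lipschitz; for $i\neq j$ we get $|\Omega_i\cap\Omega_j|=0$ from $u_iu_j=0$ a.e.\ in $D$ together with the lower bounds, so $\Omega_i\cap\Omega_j=\emptyset$ being open. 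For $\partial\Omega_i$: any point of $\R^d\setminus\overline{J_{u_i}}$ lies in a component which is either $\Omega_i$ or disjoint from $\Omega_i$, hence is not in $\partial\Omega_i$, so $\partial\Omega_i\subseteq\overline{J_{u_i}}$; conversely $\overline{J_{u_i}}\cap\Omega_i=\emptyset$, while every $x\in J_{u_i}$ has $u_i^+(x)>0$ and therefore $|\{u_i>0\}\cap B_\rho(x)|>0$ for all $\rho>0$, i.e.\ $x\in\overline{\Omega_i}$, so passing to closures $\overline{J_{u_i}}\subseteq\overline{\Omega_i}\setminus\Omega_i=\partial\Omega_i$. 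Thus $\partial\Omega_i=\overline{J_{u_i}}$, which is $\mathcal H^{d-1}$-rectifiable of finite measure, so $(\Omega_1,\dots,\Omega_k)\in\mathcal A(D)$, and $\mathcal H^{d-1}((\partial\Omega_i\setminus J_{u_i})\cap D)=\mathcal H^{d-1}((\overline{J_{u_i}}\setminus J_{u_i})\cap D)=0$ by Proposition \ref{prop:ess-closed}.

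The step I expect to require the most care is the reduction to a single component in the second paragraph --- concretely, the assertion that restricting $u_i$ to one component of $\R^d\setminus\overline{J_{u_i}}$ does not raise the Robin energy. This rests on a careful bookkeeping of the boundary integral: one decomposes $J_{u_i}$ into the essential boundaries of the components of the support (using $\partial^e\omega_i\subseteq J_{u_i}$ from Corollary \ref{cor:phases}, and the fact that, by the dichotomy, $u_i$ is either $0$ or $\ge\alpha_i$ on each component, so $J_{u_i}$ carries no other jumps modulo $\overline{J_{u_i}}\setminus J_{u_i}$), and checks that the traces created along each $\partial^e V$ are exactly those already present, so the total boundary energy is not increased. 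All the other verifications are elementary point-set topology combined with the cited results.
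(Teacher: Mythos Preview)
Your outline is close to the paper's proof and, for the purposes of Theorem~\ref{t:robin}, would suffice. The substantive difference is in how connectedness is obtained. The paper defines $\Omega_i$ as the \emph{union} of all components of $\R^d\setminus\overline{J_{u_i}}$ on which $u_i$ does not vanish, and then proves that this union is connected by contradiction, invoking the argument of \cite[Theorem~6.15]{BG15}. This yields the proposition for the \emph{given} solution $u$. You instead replace $u_i$ by $u_i\1_V$ for a single component $V$, thereby passing to a \emph{different} solution; after this replacement the remaining verifications coincide with the paper's. The upshot is that you establish the conclusion only for a modified solution, not for the one handed to you --- strictly a weaker statement than the proposition (which asserts, in particular, that the support of \emph{every} optimal $u_i$ is essentially connected), though enough to finish Theorem~\ref{t:robin}.

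There is also a gap in your second paragraph. You assert that for each $V\in\mathcal V_i$ the $i$-th Rayleigh quotient does not increase when $u_i$ is replaced by $u_i\1_V$, justified by the behaviour of the numerator. But the denominator $\int u_i^2$ also drops under restriction, so this says nothing about the ratio. The correct route is the one you hint at in your last paragraph: show that the Dirichlet and jump energies split \emph{exactly} over the components. The only obstruction to exact splitting is the set of interface points where the two one-sided traces coincide; such points lie in $(\overline{J_{u_i}}\setminus J_{u_i})\cap D$, which is $\mathcal H^{d-1}$-negligible by Proposition~\ref{prop:ess-closed}. Exact splitting makes $R_i$ a genuine convex combination of the $R_V$, whence $\min_V R_V\le R_i$; together with $R_V\ge R_i$ from minimality this forces $R_V=R_i$ for all $V$, and your reduction goes through. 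As written, the inequality ``the jump part does not increase'' points in the wrong direction for this argument.
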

  
  \begin{proof} We define $\Omega _i$ as the union of the connected components of $\R^d\setminus \overline { J _{u _i}}$ where $u _i$ is not identically zero. Clearly, by construction, $\Omega _i\subseteq D$ is open, it satisfies 
$\partial \Omega _i= \overline { J _{u_i}}$,  $u _i = 0$ a.e.\ on $\R ^ d \setminus \Omega _i$, and $\Omega _i \cap \Omega _j = \emptyset$ for $j \neq i$ (the latter condition comes from $u _i \cdot u _j = 0$ for $j \neq i$). 
\par
 The property that $(\partial \Omega _i \setminus J _{u _i})\cap D$ is $\mathcal H ^ {d-1}$-negligible follows from Proposition \ref{prop:ess-closed}. Moreover, since
 $$
 \partial \Om_i\subseteq \partial D\cup ( \overline { J _{u_i}} \cap D),
 $$
 we infer $\Omega_i\in \mathcal A(D)$.
 \par
 The fact that $\Omega _i$ is connected can be easily proved by contradiction. Namely, if $\Omega _i = \Omega _i ' \cup \Omega _i ''$, with $\Omega _i '$ and $\Omega _i ''$ nonempty disjoint open sets, letting $u _i '  := u _i \cdot \1_{\Omega _i ' }$ and  
  $u _i ''  := u _i \cdot \1_{\Omega _i '' }$, considering one of the two $k$-tuples   $(u _1, \dots, u _i ', \dots , u _k )$ and $(u _1, \dots, u _i '', \dots , u _k ) $,
  and arguing as in the proof of \cite[Theorem 6.15]{BG15} leads to 
contradict the optimality of $(u _1, \dots, u _i, \dots, u _k)$ for problem $(\overline  P )$. 
\par
Finally, from the definition of $\Omega _i$ we deduce  that $u _i \in H ^ 1 (\Omega _i)$, and the remaining part of the statement follows from  
  Proposition \ref{p:bounds}. 
  \end{proof}
  
\noindent{\bf Conclusion of the proof of Theorem \ref{t:robin}}. Let $u= (u _1, \dots, u_k)\in \mathcal F (D) $ be a solution to problem $(\overline P)$, and let $(\Omega_1, \dots, \Omega _k)$ be a $k$-tuple of open connected sets as in 
  Proposition \ref{p:supports}. Let us show that $(\Omega _1, \dots, \Omega _k)$ solves problem $(P)$, namely, for every $k$-tuple $(A_1, \dots, A_k)\in \mathcal A (D)$, it holds
  \begin{equation}\label{f:opt}
  \sum _{i = 1} ^k \lambda _ 1 (\Omega _i, \beta) \leq  \sum _{i = 1} ^k \lambda _ 1 (A _i, \beta) \,.
  \end{equation}
  First we observe that, since $u _i \in H ^ 1 (\Omega _i ) \cap L ^ \infty (\Omega _i)$ with $u _i \geq \alpha _i >0$ a.e.\ on $\Omega _i$ ({\it cf.} Proposition \ref{p:supports}), $u _i$ is admissible in the minimization problem which defines  $\lambda _1 (\Omega _i, \beta)$ according to \eqref{f:deflambda}.  

Notice that
\begin{multline*}
\int_{\partial \Omega_i \cap \partial D} \big [(u_i^+) ^ 2 + (u_i ^-) ^ 2  \big ] \, d \mathcal H ^ { d-1}  
=
\int_{\overline{J_{u_i}}\cap \partial D} \big [(u_i^+) ^ 2 + (u_i ^-) ^ 2  \big ] \, d \mathcal H ^ { d-1}  
\\
=\int_{J_{u_i} \cap \partial D} \big [(u_i^+) ^ 2 + (u_i ^-) ^ 2  \big ] \, d \mathcal H ^ { d-1}  
\end{multline*}
since $u^\pm_i=0$ $\mathcal H ^ {d-1}$-a.e.\ on $(\overline{J_{u_i}}\setminus J_{u_i})\cap \partial D$ (thanks to the regularity assumed on $D$). We thus may write
\begin{multline*}
\int_{\partial \Omega_i} \big [(u_i^+) ^ 2 + (u_i ^-) ^ 2  \big ] \, d \mathcal H ^ { d-1}\\
=
\int_{\partial \Omega_i \cap \partial D} \big [(u_i^+) ^ 2 + (u_i ^-) ^ 2  \big ] \, d \mathcal H ^ { d-1} +
\int_{\partial \Omega_i \cap D} \big [(u_i^+) ^ 2 + (u_i ^-) ^ 2  \big ] \, d \mathcal H ^ { d-1}  \\
=
\int_{J_{u_i} \cap \partial D} \big [(u_i^+) ^ 2 + (u_i ^-) ^ 2  \big ] \, d \mathcal H ^ { d-1}  +
\int_{\overline{J_{u_i}} \cap D} \big [(u_i^+) ^ 2 + (u_i ^-) ^ 2  \big ] \, d \mathcal H ^ { d-1} \\
=\int_{J_{u_i}} \big [(u_i^+) ^ 2 + (u_i ^-) ^ 2  \big ] \, d \mathcal H ^ { d-1}, 
\end{multline*}
where the last equality follows from \eqref{eq:Juclosed} in Proposition \ref{prop:ess-closed}. We deduce
\begin{multline}
\label{f:stima1}
 \sum _{i = 1} ^k \lambda _ 1 (\Omega _i, \beta) \displaystyle  \leq  \sum _{i = 1} ^k
 \frac{ \int_{\Omega_i} |\nabla u_i | ^ 2  \, dx   + \beta \int_{\partial \Omega_i} \big [(u_i^+) ^ 2 + (u_i ^-) ^ 2  \big ] \, d \mathcal H ^ { d-1} }
 { \int _{\Omega} u_i^ 2 \, dx }  \\
 =  \sum _{i = 1} ^k
 \frac{ \int_{\R^d} |\nabla u_i | ^ 2  \, dx   + \beta \int_{J _{u _i}} \big [(u_i^+) ^ 2 + (u_i ^-) ^ 2  \big ] \, d \mathcal H ^ { d-1} }
 { \int _{\R^d} u_i^ 2 \, dx }.
\end{multline}
Let now $w _i \in (H ^ 1 (A _i ) \cap L ^ \infty (A _i) ) \setminus \{ 0 \}$, and set $\tilde w _i := |w_i| \cdot \1 _{A_i}$.  Then $\tilde w := (\tilde w_1, \dots, \tilde w _k)  \in \mathcal F (D)$ is an admissible competitor for problem $(\overline P)$. We have 
\begin{multline}
\label{f:stima2}
 \sum _{i = 1} ^k
 \frac{ \int_{\R^d} |\nabla u_i | ^ 2  \, dx   + \beta \int_{J _{u _i}} \big [(u_i^+) ^ 2 + (u_i ^-) ^ 2  \big ] \, d \mathcal H ^ { d-1} }
 { \int _{\R^d} u_i^ 2 \, dx }  \\
\le  \displaystyle \sum _{i = 1} ^k
 \frac{ \int_{\R^d} |\nabla \tilde w_i | ^ 2  \, dx   + \beta \int_{J _{\tilde w _i}} \big [(\tilde w_i^+) ^ 2 + (\tilde w_i ^-) ^ 2  \big ] \, d \mathcal H ^ { d-1} }
 { \int _{\R^d} \tilde w _i^ 2 \, dx }\\
\le  \displaystyle \sum _{i = 1} ^k
 \frac{ \int_{A_i} |\nabla  w_i | ^ 2  \, dx   + \beta \int_{\partial A _i} \big [( w_i^+) ^ 2 + (w_i ^-) ^ 2  \big ] \, d \mathcal H ^ { d-1} }
 { \int _{A_i} w _i^ 2 \, dx },
\end{multline}
 where the first inequality follows from  the optimality of $u$ for problem $(\overline P)$, and the second one 
from the  definition of $\tilde w _i$ (which ensures in particular that $\mathcal H ^ {d-1} ( J _{\tilde w _i} \setminus \partial A _i )= 0$).  
  
By combining \eqref{f:stima1} and \eqref{f:stima2} we obtain
$$  \sum _{i = 1} ^k \lambda _ 1 (\Omega _i, \beta)\leq \sum _{i = 1} ^k
 \frac{ \int_{A_i} |\nabla  w_i | ^ 2  \, dx   + \beta \int_{\partial A _i} \big [( w_i^+) ^ 2 + (w_i ^-) ^ 2  \big ] \, d \mathcal H ^ { d-1} }
 { \int _{A_i} w _i^ 2 \, dx },
 $$
 and the required inequality \eqref{f:opt} follows by passing to the infimum over $(w_1, \dots, w_k)$.

  \bigskip
  \noindent {\bf Acknowledgments.} The first author was supported by the  "Geometry and Spectral Optimization"  research programme 
LabEx PERSYVAL-Lab GeoSpec (ANR-11-LABX-0025-01) and ANR Comedic (ANR-15-CE40-0006). 
\bigskip

\end{document}